\documentclass[12pt]{amsart}
\usepackage{fullpage}

\usepackage{graphicx}				

\usepackage{amssymb}
\usepackage{amsthm}
\usepackage{amsmath}
\usepackage{cite}
\usepackage[all]{xy}
\usepackage{tikz}
\usepackage{enumerate}
\usepackage{verbatim}
\usepackage{mathtools}
\usepackage{hyperref}
\usepackage{colonequals}
\usepackage{mathrsfs}
\usepackage{young}

\usetikzlibrary{decorations.markings}

\theoremstyle{plane}
\newtheorem{theorem}{Theorem}[]
\newtheorem{lemma}[theorem]{Lemma}

\newtheorem{cor}[theorem]{Corollary}

\theoremstyle{definition}
\newtheorem{definition}[theorem]{Definition}
\newtheorem*{ack}{Acknowledgement}

\newcommand{\F}{\mathbb{F}}

\let\k\relax
\newcommand{\k}{\mathbf{k}}


\newcommand{\B}{\mathcal{B}}
\newcommand{\E}{\mathcal{E}}
\newcommand{\U}{\mathcal{U}}

\newcommand{\D}{\mathcal{D}}
\newcommand{\OO}{\mathcal{O}}

\newcommand{\X}{\mathcal{X}}
\newcommand{\Y}{\mathcal{Y}}
\newcommand{\V}{\mathcal{V}}
\let\L\relax
\newcommand{\L}{\mathcal{L}}
\let\P\relax
\newcommand{\P}{\mathcal{P}}


\newcommand{\n}{\mathfrak{n}}

\newcommand{\g}{\mathfrak{g}}

\let\b\relax
\newcommand{\b}{\mathfrak{b}}
\let\t\relax
\newcommand{\t}{\mathfrak{t}}

\newcommand{\M}{\mathbb{M}}
\newcommand{\N}{\mathbb{N}}
\newcommand{\UU}{\mathbb{U}}
\newcommand{\VV}{\mathbb{V}}


\newcommand{\e}{\varepsilon}

\DeclarePairedDelimiter\ceil{\lceil}{\rceil}
\DeclarePairedDelimiter\floor{\lfloor}{\rfloor}

\DeclareMathOperator{\Ind}{Ind}

\newcommand{\ola}[1]{\overleftarrow{#1}}

\newcommand{\pol}[1]{\mathbb{#1}}

\title{A Comparison of Two Complexes}
\author{Dongkwan Kim}
\address{Department of Mathematics\\
  Massachusetts Institute of Technology\\
  Cambridge, MA 02139-4307\\
  U.S.A.}
\email{sylvaner@math.mit.edu}
\date{\today}							

\begin{document}
\begin{abstract}
In this paper we prove the conjecture of Lusztig in \cite[Section 4]{charring}. Given a reductive group over $\overline{\mathbb{F}_q}[\e]/(\e^r)$ for some $r \geq 2$, there is a notion of a character sheaf defined in \cite[Section 8]{chargen}. On the other hand, there is also a geometric analogue of the character constructed by G\'erardin \cite{gerardin}. The conjecture in \cite[Section 4]{charring} states that the two constructions are equivalent, which Lusztig also proved for $r=2, 3, 4$. Here we generalize his method to prove this conjecture for general $r$. As a corollary we prove that the characters derived from these two complexes are equal.
\end{abstract}

\maketitle

\tableofcontents

\section{Introduction}
The purpose of this paper is to prove the conjecture in \cite[Section 4]{charring}. We first recall its setting. Let $\k$ be an algebraic closure of a finite field $\mathbb{F}_q$ where $q$ is a power of prime $p$ and $G$ be a connected reductive algebraic group over $\k$. Let $T$ be a maximal torus and $B$ a Borel subgroup of $G$ which contains $T$. Also we let $U$ be the unipotent radical of $B$. We denote the Lie algebra of $G, B, T, U$ by $\g, \b, \t, \mathfrak{u}$, respectively.

For a fixed integer $r \geq 2$, we define $G_r \colonequals G(\k[\e]/(\e^r))$ where $\e$ is an indeterminate. Note that it has a natural algebraic group structure over $\k$. Similarly, we define $B_r \colonequals B(\k[\e]/(\e^r)), T_r \colonequals T(\k[\e]/(\e^r)),$ and $U_r\colonequals U(\k[\e]/(\e^r))$, which are again considered as algebraic groups over $\k$. 

Throughout this paper, we assume $p \geq r$ and thus we have an isomorphism of varieties
\begin{gather}\label{giso}
G \times \g^{r-1} \xrightarrow{\simeq} G_r : (x, X_1, \cdots, X_{r-1}) \mapsto x e^{\e X_1} \cdots e^{\e^{r-1} X_{r-1}}.
\end{gather}
Note that $e^{\e X_1}, \cdots, e^{\e^{r-1} X_{r-1}}$ are well-defined since $p \geq r$. If $G$ is abelian (thus so is $\g$) then (\ref{giso}) is also an isomorphism of algebraic groups.

Now we define 
\begin{align*}
\tilde{G}_r =  & \{(B_rg, g') \in (B_r\backslash G_r)\times G_r  \mid gg'g^{-1} \in B_r\}
\end{align*}
and consider the following diagram
\begin{displaymath}
\xymatrix{
T_r&\tilde{G}_r \ar[l]_{\tilde{\tau}} \ar[r]^{\tilde{\pi}}&G_r }
\end{displaymath}
where the morphisms on the diagram are defined as follows.
\begin{gather*}
\tilde{\pi}(B_rg, g') = g',\qquad \tilde{\tau}(B_rg, g') = \sigma(gg'g^{-1})
\end{gather*}
Here $\sigma: B_r \rightarrow T_r$ is the composition of the quotient morphism $B_r \rightarrow B_r/U_r$ and the inverse of $T_r \hookrightarrow B_r \rightarrow B_r/U_r$.


For any variety $X$ over $\k$, we denote by $\D(X)$ the bounded constructible derived category of $\ell$-adic sheaves where $\ell \neq p$ is a fixed prime. One of the main goals in \cite{charring} is to show that for a generic character sheaf $\L$ on $T_r$, $\tilde{\pi}_!\tilde{\tau}^*\L$ is an intersection cohomology complex. (For the definition of character sheaves on $T_r$ one may refer to \cite{chargen}.) One obstacle for this problem is that unlike the case $r=1$, the morphism $\tilde{\pi} : \tilde{G}_r \rightarrow G_r$ is no longer proper.

In \cite{charring} one strategy is explained; first we compare this complex to a similar one given by a geometric analogue of the character in \cite{gerardin} and use Fourier-Deligne transform to show that the latter one is indeed an intersection cohomology complex up to shift. In this paper we are interested in comparing these two complexes, which is proven for $r=2,3,4$ in \cite[Section 4]{charring}. Here we generalize the method of \cite{charring} to arbitrary $r\geq 2$.

\begin{ack}
I thank George Lusztig for suggesting this topic, carefully reading the draft of this paper, and giving valuable feedback.
\end{ack}

\section{Notations and the main theorem}
We recall the isomorphism of varieties (\ref{giso}) on the previous section for $T_r$.
\begin{gather}\label{tiso}
T \times \t^{r-1} \xrightarrow{\simeq} T_r : (x, X_1, \cdots, X_{r-1}) \mapsto x e^{\e X_1} \cdots e^{\e^{r-1} X_{r-1}}
\end{gather}
Since $T$ is abelian, this is indeed an isomorphism of algebraic groups. From now on we fix a non-degenerate symmetric bilinear invariant form $\langle\ ,\ \rangle$ on $\g$ and a non-trivial additive character $\psi: \F_q \rightarrow \mathbb{C}^*$. We consider the following complex
\begin{displaymath}
\E \boxtimes \L_{f} \in \D(T\times \t^{r-1})
\end{displaymath}
where $\E$ is a character sheaf on $T$ and $\L_{f}$ is the Artin-Schreier sheaf with respect to $\psi$ and the function
\begin{displaymath}
f: \t^{r-1} \rightarrow \k : (X_1, \cdots, X_{r-1}) \mapsto  \sum_{i=1}^{r-1} \langle A_i, X_i \rangle
\end{displaymath}
where $A_1, \cdots, A_{r-1} \in \t$. (For the definition of Artin-Schreier sheaf, we refer readers to \cite[0.3]{charring}.) By the isomorphism (\ref{tiso}), we may consider it as a complex on $T_r$, i.e. $\E \boxtimes \L_f \in \D(T_r)$. 

\begin{definition}
We say that this complex $\E \boxtimes \L_f\in \D(T_r)$ is \emph{generic} if $A_{r-1}$ is regular semisimple.
\end{definition}


One of the main objects in \cite{charring} is the complex $\tilde{\pi}_! \tilde{\tau}^* (\E \boxtimes \L_{f}) \in \D(G_r)$. In order to compare this with a geometric analogue of the character in \cite{gerardin}, we first replace $\tilde{G}_r$ with a fiber bundle which we call $\X$ as follows.
\begin{displaymath}
\X \colonequals \{(Tg, g') \in (T\backslash G_r)\times G_r  \mid gg'g^{-1} \in B_r\}
\end{displaymath}
Then we have an obvious morphism $\rho: \X \rightarrow \tilde{G}_r : (Tg, g') \mapsto (B_rg, g')$, which makes $\X$ a fiber bundle with the fiber isomorphic to an affine space, or the unipotent radical of $B_r$.

We give another description of $\X$. For $j \geq 1$, let 
\begin{displaymath}
\pol{P}_j = u_j(\pol{A}_1, \cdots, \pol{A}_j,  \pol{B}_1, \cdots, \pol{B}_j, \pol{C}_1, \cdots, \pol{C}_j)
\end{displaymath}
be a non-commutative polynomial in indeterminates $\pol{A}_i, \pol{B}_i, \pol{C}_i$ for $i \geq 1$ such that the following equation holds.
\begin{align*}
e^{\e \pol{A}_1} \cdots e^{\e^j \pol{A}_j} e^{\e \pol{B}_1} \cdots e^{\e^j \pol{B}_j} (e^{\e \pol{C}_1} \cdots e^{\e^j \pol{C}_j})^{-1} = e^{\e \pol{P}_1} \cdots e^{\e^j \pol{P}_j} \mod \e^{j+1}
\end{align*}
By the Campbell-Hausdorff formula and induction on $j$, it is easy to show that $\pol{P}_j - (\pol{A}_j+\pol{B}_j-\pol{C}_j)$ is a Lie polynomial of $\pol{A}_1, \cdots, \pol{A}_{j-1}, \pol{B}_1, \cdots, \pol{B}_{j-1}, \pol{C}_1, \cdots, \pol{C}_{j-1}$ without a constant or linear term. Now we see that $\X$ may be also defined by the following formula.
\begin{align*}
 \X =  & \{(Tx, y, X_1, \cdots, X_{r-1}, Y_1, \cdots, Y_{r-1}) \in (T\backslash G)\times G \times \mathfrak{g}^{2r-2} \mid xyx^{-1} \in B, 
\\ &u_j({}^{y^{-1}}X_1, \cdots, {}^{y^{-1}}X_j, Y_1, \cdots, Y_j, X_1, \cdots, X_j) \in {}^{x^{-1}}\mathfrak{b} \textup{ for } 1 \leq j \leq r\}
\end{align*}
Here we use the isomorphism (\ref{giso}) to identify 
\begin{displaymath}
Tg = (Tx)e^{\e X_1} \cdots e^{\e^{r-1} X_{r-1}}, \qquad g' = ye^{\e Y_1} \cdots e^{\e^{r-1} Y_{r-1}}.
\end{displaymath}
Then we have the following diagram
\begin{equation}\label{diag1}
\begin{gathered}
\xymatrix{
&\X \ar[ld]_{\tau} \ar[rd]^{\pi} \ar[d]^{\rho}
\\T_r&\tilde{G}_r \ar[l]_{\tilde{\tau}} \ar[r]^{\tilde{\pi}}&G_r }
\end{gathered}
\end{equation}
where new morphisms are defined as follows.
\begin{align*}
&\rho(Tx, y, X_1, \cdots, X_{r-1}, Y_1, \cdots, Y_{r-1}) = (B_r x e^{\e X_1} \cdots e^{\e^{r-1} X_{r-1}}, y e^{\e Y_1} \cdots e^{\e^{r-1} Y_{r-1}})
\\&\tau(Tx, y, X_1, \cdots, X_{r-1}, Y_1, \cdots, Y_{r-1}) 
\\&= \sigma((x e^{\e X_1} \cdots e^{\e^{r-1} X_{r-1}})(y e^{\e Y_1} \cdots e^{\e^{r-1} Y_{r-1}})(x e^{\e X_1} \cdots e^{\e^{r-1} X_{r-1}})^{-1})
\\&= \sigma(xy e^{\e ({}^{y^{-1}}X_1)} \cdots e^{\e^{r-1} ({}^{y^{-1}}X_{r-1})}e^{\e Y_1} \cdots e^{\e^{r-1} Y_{r-1}}(e^{\e X_1} \cdots e^{\e^{r-1} X_{r-1}})^{-1}x^{-1})
\\&= \sigma({}^x(ye^{\e \U_1} \cdots e^{\e^{r-1} \U_{r-1}}))=\sigma(xyx^{-1} e^{\e ({}^x\U_1)} \cdots e^{\e^{r-1} ({}^x\U_{r-1})})
\\&\pi(Tx, y, X_1, \cdots, X_{r-1}, Y_1, \cdots, Y_{r-1}) = y e^{\e Y_1} \cdots e^{\e^{r-1} Y_{r-1}}
\end{align*}
(Recall that $\sigma: B_r \rightarrow T_r$ is defined above.) Here 
\begin{displaymath}
\U_j = u_j({}^{y^{-1}}X_1, \cdots, {}^{y^{-1}}X_j, Y_1, \cdots, Y_j, X_1, \cdots, X_j)
\end{displaymath}
for $1\leq j \leq r-1$. One can easily check that this definition makes the diagram commute. Therefore, since $\rho$ is an affine space bundle, we have 
\begin{align}\label{comiso}
\pi_! \tau^* (\E \boxtimes \L_{f})&=\tilde{\pi}_!\rho_!\rho^* \tilde{\tau}^* (\E \boxtimes \L_{f})=\tilde{\pi}_! \tilde{\tau}^* (\E \boxtimes \L_{f})[-2d](-d)
\end{align}
where $d=r \dim \b - \dim T$ is the dimension of the fiber of $\X \rightarrow \tilde{G}_r$. Now if we define
\begin{align*}
&\iota : \X \rightarrow T : (Tx, y, X_1, \cdots, X_{r-1}, Y_1, \cdots, Y_{r-1}) \mapsto \overline{xyx^{-1}}\\
&h: \X \rightarrow \k : (Tx, y, X_1, \cdots, X_{r-1}, Y_1, \cdots, Y_{r-1}) \mapsto \sum_{i=1}^{r-1} \langle A_i, {}^x \U_i \rangle = \sum_{i=1}^{r-1} \langle {}^{x^{-1}}A_i, \U_i \rangle
\end{align*}
(where $\U_i$ is the same as above and $\overline{xyx^{-1}}$ is the image of $xyx^{-1} \in B$ under the composition of the quotient morphism $B \rightarrow B/U$ and the inverse of $T \hookrightarrow B \rightarrow B/U$) then by definition we have
\begin{align}\label{comiso2}
\pi_! \tau^* (\E \boxtimes \L_{f}) = \pi_! (\iota^*\E \otimes \L_h).
\end{align}
We wish to compare this to a geometric analogue of the character in \cite{gerardin}. To that end, we first define
\begin{gather*}
G_r^i = \{g \in G_r \mid g \equiv id \mod \e^i\}
\\B_r^i = B_r \cap G_r^i, \quad T_r^i = T_r\cap G_r^i, \quad U_r^i = U_r \cap G_r^i.
\end{gather*}
and define $\Y$ analogous to $\X$ as follows.
\begin{align*}
\Y \colonequals  &\{(Tg, g') \in (T\backslash G_r)\times G_r \mid gg'g^{-1} \in T_{r} \cdot G_{r}^{r/2} \textup{ if } r \textup{ is even},
\\ &\qquad \qquad gg'g^{-1} \in T_r\cdot B_r^{(r-1)/2} \cdot G_r^{(r+1)/2} \textup{ if } r \textup{ is odd}\}
\end{align*}
By the isomorphism (\ref{giso}) it can be also written as
\begin{align*}
\Y =  & \{(Tx, y, X_1, \cdots, X_{r-1}, Y_1, \cdots, Y_{r-1}) \in (T\backslash G)\times G \times \mathfrak{g}^{2r-2} \mid xyx^{-1} \in T, 
\\ &u_j({}^{y^{-1}}X_1, \cdots, {}^{y^{-1}}X_j, Y_1, \cdots, Y_j, X_1, \cdots, X_j) \in {}^{x^{-1}}\mathfrak{t} \textup{ for } 1 \leq j \leq \floor{r/2}-1,
\\ &u_j({}^{y^{-1}}X_1, \cdots, {}^{y^{-1}}X_j, Y_1, \cdots, Y_j, X_1, \cdots, X_j) \in {}^{x^{-1}}\mathfrak{b}  \textup{ for } j=\floor{r/2} \textup{ if } r \textup{ is odd}\}
\end{align*}
Similarly we also define 
\begin{align*}
&\iota': \Y \rightarrow T : (Tx, y, X_1, \cdots, X_{r-1}, Y_1, \cdots, Y_{r-1}) \mapsto xyx^{-1}
\\&h': \Y \rightarrow \k : (Tx, y, X_1, \cdots, X_{r-1}, Y_1, \cdots, Y_{r-1}) \mapsto \sum_{i=1}^{r-1} \langle A_i, {}^x \U_i \rangle = \sum_{i=1}^{r-1} \langle {}^{x^{-1}}A_i, \U_i \rangle
\\&\pi': \Y \rightarrow G_r : (Tx, y, X_1, \cdots, X_{r-1}, Y_1, \cdots, Y_{r-1}) \mapsto y e^{\e Y_1} \cdots e^{\e^{r-1} Y_{r-1}}
\end{align*}
Now we are ready to state the main theorem of this paper.
\begin{theorem}\label{mainthm} If $p \geq r$ and $A_{r-1}$ is regular semisimple, then $\pi_! (\iota^*\E \otimes \L_h) \simeq \pi'_! (\iota'^*\E \otimes \L_{h'})$.
\end{theorem}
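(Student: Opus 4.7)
\emph{Strategy.} The plan is to interpolate between $\X$ and $\Y$ via a sequence of intermediate spaces $\X = \mathcal{Z}_0, \mathcal{Z}_1, \ldots, \mathcal{Z}_{\floor{r/2}} = \Y$, proving that the pushforward to $G_r$ of the associated complex is preserved at each step. Define $\mathcal{Z}_m$ (for $0 \leq m \leq \floor{r/2}$) as the subvariety of $(T\backslash G) \times G \times \g^{2r-2}$ cut out by: $xyx^{-1} \in T$ if $m \geq 1$ (else $xyx^{-1} \in B$); $\U_j \in {}^{x^{-1}}\t$ for $1 \leq j \leq m-1$; $\U_j \in {}^{x^{-1}}\b$ for $m \leq j \leq r-1-m$; and no condition for $j \geq r-m$. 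A direct check shows $\mathcal{Z}_0 = \X$, $\mathcal{Z}_{\floor{r/2}} = \Y$, and $\dim \mathcal{Z}_m$ is independent of $m$ (each transition strengthens one condition and weakens another, each contributing $\pm \dim \mathfrak{u}$). Let $\pi_m, \iota_m, h_m$ denote the analogues of $\pi, \iota, h$ on $\mathcal{Z}_m$ and write $\mathcal{F}_m = \iota_m^*\E \otimes \L_{h_m}$.

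\emph{Key transition.} For each $1 \leq m \leq \floor{r/2}$, I would prove $(\pi_{m-1})_! \mathcal{F}_{m-1} \simeq (\pi_m)_! \mathcal{F}_m$. The transition pairs a strengthening at level $m-1$ (restricting away a $\mathfrak{u}$-direction $N_{m-1}$ of $\U_{m-1}$) with a weakening at level $r-m$ (freeing up a $\mathfrak{u}^-$-direction $M_{r-m}$ of $\U_{r-m}$); the two level indices sum to $r-1$. Introduce the common enlargement $\mathcal{W}_m$ in which both $N_{m-1}$ and $M_{r-m}$ are unrestricted, so that $\mathcal{W}_m$ is a trivial rank-$\dim \mathfrak{u}$ affine bundle over both $\mathcal{Z}_{m-1}$ and $\mathcal{Z}_m$. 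The crucial computation: by the Campbell-Hausdorff formula, $\U_{r-1}$ contains the term $\frac{1}{2}[\U_{m-1}, \U_{r-m}]$ (indices summing to $r-1$), so $h_{\mathcal{W}_m} = \sum_i \langle A_i, {}^x \U_i \rangle$ contains the bilinear-in-$(N_{m-1}, M_{r-m})$ piece $\frac{1}{2}\langle A_{r-1}, {}^x[N_{m-1}, M_{r-m}]\rangle$. By invariance of the form and regular semisimplicity of $A_{r-1}$, the map $M \mapsto [{}^{x^{-1}}A_{r-1}, M]$ is an isomorphism on ${}^{x^{-1}}\mathfrak{u}^-$, and composing with the non-degenerate pairing $\langle\ ,\ \rangle\colon {}^{x^{-1}}\mathfrak{u} \times {}^{x^{-1}}\mathfrak{u}^- \to \k$ yields a non-degenerate bilinear form in $(N_{m-1}, M_{r-m})$ on each fiber. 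After absorbing (via a fiber-wise change of variables enabled by this non-degeneracy) the linear-in-$N_{m-1}$ and linear-in-$M_{r-m}$ terms coming from other Campbell-Hausdorff contributions, one identifies both $(\pi_{m-1})_! \mathcal{F}_{m-1}$ and $(\pi_m)_! \mathcal{F}_m$ with $(\pi_{\mathcal{W}_m})_! \mathcal{F}_{\mathcal{W}_m}$ up to the same Tate twist $[-2\dim \mathfrak{u}](-\dim \mathfrak{u})$, which cancels in the comparison.

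\emph{Main obstacle.} The hardest part is tracking the lower-order Campbell-Hausdorff terms. The Lie polynomial $u_{r-1}$ has iterated-commutator depth up to $r-1$, and varying $N_{m-1}$ or $M_{r-m}$ affects many terms beyond the single bracket $[\U_{m-1}, \U_{r-m}]$; the sheaf $\L_{h_{\mathcal{W}_m}}$ also picks up contributions from $\U_j$ with $j < r-1$. These produce linear-in-$N_{m-1}$ or -$M_{r-m}$ terms (plus lower-order bilinear and quadratic terms) in $h_{\mathcal{W}_m}$, all of which must be absorbed via the change of variables made possible by the non-degeneracy of the top-level bilinear form. Verifying this absorption uniformly across $m$, and handling the exceptional level-$0$ case (when $m = 1$, where the strengthening is $xyx^{-1} \in B$ to $xyx^{-1} \in T$ rather than a condition on some $\U_j$), requires a careful induction that generalizes Lusztig's case-by-case analysis for $r = 2, 3, 4$. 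A pleasant structural feature is that only the regularity of $A_{r-1}$ (and not any conditions on $A_1, \ldots, A_{r-2}$) is needed, because the top-level ($\e^{r-1}$) Campbell-Hausdorff term dominates the non-degenerate pairing.
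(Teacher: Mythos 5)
Your overall architecture --- interpolating through intermediate spaces and exploiting the non-degenerate pairing $(N,M)\mapsto\langle[{}^{x^{-1}}A_{r-1},M],N\rangle$ between $\n$- and $\n^-$-components at levels summing to $r-1$ --- is the right idea, and your ``diagonal'' path $\mathcal{Z}_0,\dots,\mathcal{Z}_{\floor{r/2}}$ is a genuinely different route from the paper's two-phase chain $\X=\X_r\subset\cdots\subset\X_{r''}=\Y_0\supset\cdots\supset\Y_{r'}=\Y$. However, the key transition as you describe it has a real gap. On a fiber where $h=B(N,M)+\ell_1(N)+\ell_2(M)+c$ with $B$ non-degenerate, the three relevant integrals are
\begin{displaymath}
\textstyle\int_{N,M}\psi(h)=q^{\dim\n}\,\psi\bigl(c-\ell_2(B^{-1}\ell_1)\bigr),\qquad \int_{M=0}\psi(h)=q^{\dim\n}\,\psi(c)\,[\ell_1=0],\qquad \int_{N=0}\psi(h)=q^{\dim\n}\,\psi(c)\,[\ell_2=0],
\end{displaymath}
so $(\pi_{\mathcal{W}_m})_!\mathcal{F}_{\mathcal{W}_m}$ agrees with both $(\pi_{m-1})_!\mathcal{F}_{m-1}$ and $(\pi_m)_!\mathcal{F}_m$ up to a common twist only if $\ell_1$ and $\ell_2$ vanish identically. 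Your proposed fix --- ``absorbing the linear terms via a fiber-wise change of variables'' --- does not work: completing the square translates $N$ and $M$ and therefore moves the subvarieties $\mathcal{Z}_{m-1}=\{M=0\}$ and $\mathcal{Z}_m=\{N=0\}$ off themselves, so it cannot be used to compute the two restricted pushforwards. What must actually be proved is that the unwanted linear (and quadratic, and cross) terms vanish on the nose; establishing this via the structure of the Campbell--Hausdorff polynomials $\psi_j$ (Lemmas \ref{formula1}--\ref{formula3}) together with the orthogonality relations $\langle\t,\n\rangle=\langle\n,\n\rangle=0$ is precisely the hard content, and your sketch replaces it with an invalid step. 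A second unaddressed point: $N_{m-1}$ and $M_{r-m}$ are components of the $\U_j$, not coordinates on the variety, so realizing $\mathcal{W}_m$ as an affine bundle over each of $\mathcal{Z}_{m-1}$, $\mathcal{Z}_m$ requires free group actions as in the paper; the natural action at level $r-m$ changes $\U_{r-m}$ by ${}^{y^{-1}}E-E$, which need not sweep out $\n^-$ (e.g.\ when $y$ is central), so the claimed bundle structure is not automatic.

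For contrast, the paper avoids any two-variable Fourier identification: each step is an open--closed decomposition giving a triangle $M_i\to L_i\to L_{i+1}\xrightarrow{+1}$, and $M_i$ is killed by showing that $\hat h$ restricted to each orbit of a free unipotent (resp.\ additive) action on the open stratum is a \emph{non-constant} affine linear function --- non-constancy coming exactly from the fact that on that stratum the offending component of $\U_i$ is nonzero. This one-variable vanishing argument sidesteps both difficulties above. Your scheme might be salvageable, but only after proving the identical vanishing of $\ell_1,\ell_2$ (which subsumes most of the paper's lemmas) and constructing the bundle structures; as written it is not a proof.
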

In \cite{charring} a strategy is explained to prove that $\pi'_! (\iota'^*\E \otimes \L_{h'})$ is an intersection cohomology complex up to shift. Thus if this theorem is true, then we see that $\tilde{\pi}_! \tilde{\tau}^* (\E \boxtimes \L_{f})$ also has the same property. Another corollary, the equivalence of characters from two different constructions, is explained on Section \ref{equiv}. From now on we focus on proving this theorem.

\section{Strategy to prove the main theorem}
We briefly explain the strategy in \cite[Section 4]{charring} to prove Theorem \ref{mainthm}. First we describe some definitions to be used throughout this paper. (We note that the notations would be slightly different from those in \cite{charring}.) We set $r' = \floor{\frac{r}{2}}, r''=\ceil{\frac{r}{2}}$ such that $r=r'+r''$ and $r'\leq r'' \leq r'+1$. 
In order to compare two complexes from $\X$ and $\Y$, we define the "intermediate steps" between them. For $r''\leq i \leq r$ we define
\begin{align*}
\mathcal{X}_i \colonequals \{(Tg, g') \in (T\backslash G_r) \times G_r \mid gg'g^{-1} \in B_rG_r^{i}\},
\end{align*}
or in other words
\begin{align*}
\mathcal{X}_i \colonequals & \{(Tx, y, X_1, \cdots, X_{r-1}, Y_1, \cdots, Y_{r-1}) \in (T\backslash G)\times G \times \mathfrak{g}^{2r-2} \mid xyx^{-1} \in B, 
\\ &u_j({}^{y^{-1}}X_1, \cdots, {}^{y^{-1}}X_j, Y_1, \cdots, Y_j, X_1, \cdots, X_j) \in {}^{x^{-1}}\mathfrak{b} \textup{ for } 1 \leq j \leq i-1 \}.
\end{align*}
Likewise, for $0\leq i \leq r'$ we define
\begin{align*}
\mathcal{Y}_i \colonequals \{(Tg, g') \in (T\backslash G_r) \times G_r \mid gg'g^{-1} \in T_r B_r^{i}G_r^{r''}\}
\end{align*}
or similarly, for $i=0$ we define
\begin{align*}
\mathcal{Y}_0 \colonequals & \{(Tx, y, X_1, \cdots, X_{r-1}, Y_1, \cdots, Y_{r-1}) \in (T\backslash G)\times G \times \mathfrak{g}^{2r-2} \mid xyx^{-1} \in B, 
\\ &u_j({}^{y^{-1}}X_1, \cdots, {}^{y^{-1}}X_j, Y_1, \cdots, Y_j, X_1, \cdots, X_j) \in {}^{x^{-1}}\mathfrak{b} \textup{ for } 1 \leq j \leq r''-1\}
\end{align*}
and for $1 \leq i \leq r'$ we define
\begin{align*}
\mathcal{Y}_i \colonequals & \{(Tx, y, X_1, \cdots, X_{r-1}, Y_1, \cdots, Y_{r-1}) \in (T\backslash G)\times G \times \mathfrak{g}^{2r-2} \mid xyx^{-1} \in T, 
\\ &u_j({}^{y^{-1}}X_1, \cdots, {}^{y^{-1}}X_j, Y_1, \cdots, Y_j, X_1, \cdots, X_j) \in {}^{x^{-1}}\mathfrak{t} \textup{ for } 1 \leq j \leq i-1,
\\ &u_j({}^{y^{-1}}X_1, \cdots, {}^{y^{-1}}X_j, Y_1, \cdots, Y_j, X_1, \cdots, X_j) \in {}^{x^{-1}}\mathfrak{b} \textup{ for } i \leq j \leq r''-1 \}.
\end{align*}
Note that 
$\X = \X_{r} \subset \cdots \subset \X_{r''} = \Y_{0}  \supset \cdots \supset \Y_{r'} = \Y$. Also we define morphisms $\hat{\pi}, \hat{h}, \hat{\iota}$ on $\X_{r''} = \Y_0$ as follows.
\begin{gather*}
\hat{\pi}(Tx, y, X_1, \cdots, X_{r-1}, Y_1, \cdots, Y_{r-1}) = ye^{\e Y_1}\cdots e^{\e^{r-1} Y_{r-1}},
\end{gather*}
\begin{align*}
&\hat{h}(Tx, y, X_1, \cdots, X_{r-1}, Y_1, \cdots, Y_{r-1}) \\
&= \sum_{j=1}^{r-1} \langle {}^{x^{-1}} A_j, u_j({}^{y^{-1}}X_1, \cdots, {}^{y^{-1}}X_j, Y_1, \cdots, Y_j, X_1, \cdots, X_j)\rangle,
\\&\hat{\iota}(Tx, y, X_1, \cdots, X_{r-1}, Y_1, \cdots, Y_{r-1}) = \overline{xyx^{-1}}.
\end{align*}
(Recall that $\overline{xyx^{-1}} \in T$ is the image of $xyx^{-1} \in B$ under the composition of the quotient morphism $B \rightarrow B/U$ and the inverse of $T \hookrightarrow B \rightarrow B/U$.) Then we have the following commutative diagram.
\begin{displaymath}
\xymatrix{&&G_r&&
\\\X = \X_{r} \ar@{^{(}->}[r] \ar[urr]^{\pi} \ar[d]_(.6){\iota} \ar[drrrr]_(.8){h}
& \cdots \ar@{^{(}->}[r]
& \X_{r''}=\Y_{0} \ar[u]_{\hat{\pi}} \ar[dll]_(.75){\hat{\iota}} \ar[drr]^(.75){\hat{h}}
&\ar@{_{(}->}[l] \cdots
& \ar@{_{(}->}[l] \Y_{r'} = \Y \ar[ull]_{\pi'} \ar[dllll]^(.8){\iota'} \ar[d]^(.6){h'}
\\T&&&&\k\\
}
\end{displaymath}
Now for $r'' \leq i \leq r$, we define 
\begin{displaymath}
L_i = (\hat{\pi}|_{\X_i})_{!}((\hat{\iota}|_{\X_i})^*{\mathcal{E}}\otimes \mathcal{L}_{\hat{h}|_{\X_i}}) \in \D(G_r)
\end{displaymath}
and similarly for $0\leq i \leq r'$ we define 
\begin{displaymath}
L_i' = (\hat{\pi}|_{\Y_i})_{!}((\hat{\iota}|_{\Y_i})^*{\mathcal{E}}\otimes \mathcal{L}_{\hat{h}|_{\Y_i}}) \in \D(G_r).
\end{displaymath}
Then Theorem \ref{mainthm} is equivalent to that $L_{r}=L'_{r'}$. To that end, we will successively show the following.
\begin{displaymath}
L_{r} = L_{r-1} = \cdots = L_{r''}= L'_{0} = \cdots = L'_{r'-1} = L'_{r'}.
\end{displaymath}
(Note that $L_{r''}= L'_{0}$ is automatic.) By \cite[1.4]{bbd}, for $r'' \leq i \leq r-1$ we have a distinguished triangle
\begin{displaymath}
M_i \rightarrow L_i \rightarrow L_{i+1} \xrightarrow{+1}
\end{displaymath}
where 
\begin{displaymath}
M_i \colonequals (\hat{\pi}|_{\X_{i} - \X_{i+1}})_! ((\hat{\iota}|_{\X_{i} - \X_{i+1}})^* \E \otimes \L_{\hat{h}|_{\X_{i} - \X_{i+1}}}).
\end{displaymath}
  Thus if we can show $M_{r''} = \cdots = M_{r-1} = 0$, then the half of the assertion above is justified. Likewise, for the other half it suffices to show $M_0'=\cdots=M_{r'-1}=0$ where
\begin{displaymath}
M_i' \colonequals (\hat{\pi}|_{\Y_{i} - \Y_{i+1}})_! ((\hat{\iota}|_{\Y_{i} - \Y_{i+1}})^* \E \otimes \L_{\hat{h}|_{\Y_{i} - \Y_{i+1}}}).
\end{displaymath}
In the following sections we show that this is indeed the case. First we need some formulae which will be used in the proof.

\section{Some formulae}
Let $r \geq 2$ and suppose we are given the following non-commutative equation.
\begin{equation}\label{firstformula}
e^{\e^i \M_i}\cdots e^{\e^{r-1} \M_{r-1}} e^{\e \UU_1} \cdots e^{\e^{r-1} \UU_{r-1}}e^{-\e^{r-1} \N_{r-1}}\cdots e^{-\e^i \N_i}= e^{\e \VV_1} \cdots e^{\e^{r-1} \VV_{r-1}} \mod \e^r 
\end{equation}
Note that it is indeed a non-commutative polynomial equation. 
\begin{lemma}\label{formula1} Suppose (\ref{firstformula}) is given. For $1 \leq j \leq  r-1$, we have
\begin{align*}
&\VV_j = \UU_j,&j<i\\
&\VV_j = \UU_j + \M_j-\N_j,&j=i\\
&\VV_j = \UU_j +  \M_j-\N_j+\psi_j(\M_i, \cdots, \M_{j-1},\N_i, \cdots, \N_{j-1}, \UU_1, \cdots, \UU_{j-1}),&j >i
\end{align*}
for some Lie polynomial $\psi_j$ which does not depend on $r$ without a constant or linear term. In particular, $\VV_j$ is a Lie polynomial of $\M_i, \cdots, \M_{j},\N_i, \cdots, \N_{j}, \UU_1, \cdots, \UU_{j}$ which does not depend on $r$ whenever $j \leq r-1$. 
\end{lemma}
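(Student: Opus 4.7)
My plan is to split the statement according to the relation between $j$ and $i$, handling $j<i$ and $j=i$ directly by truncation and the case $j>i$ by induction on $j$. The tools are the Campbell--Baker--Hausdorff (CBH) formula and the uniqueness of the ordered factorisation $e^{\e\VV_1}\cdots e^{\e^{r-1}\VV_{r-1}}$ coming from the isomorphism (\ref{giso}) applied to $\g$ in the formal group truncated modulo $\e^r$. The governing principle throughout is the degree count: an iterated bracket $[\e^{a_1}Z_1,[\e^{a_2}Z_2,\ldots]]$ has $\e$-degree $a_1+a_2+\cdots$, so it can only contribute to $\VV_j$ when this sum is at most $j$; in particular, modulo $\e^{j+1}$ every factor of $\e$-degree exceeding $j$ is trivial, which will immediately yield the claimed independence from $r$.

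For $j<i$, truncating (\ref{firstformula}) modulo $\e^{j+1}$ kills every $e^{\pm\e^k\M_k}$ and $e^{\pm\e^k\N_k}$ (since $k\geq i>j$), reducing the identity to $e^{\e\UU_1}\cdots e^{\e^j\UU_j}\equiv e^{\e\VV_1}\cdots e^{\e^j\VV_j}\pmod{\e^{j+1}}$, and uniqueness forces $\VV_k=\UU_k$ for all $k\leq j$. For $j=i$, truncating modulo $\e^{i+1}$ leaves only the outermost $\M_i$ and $-\N_i$ among the outer factors; since $[\e^i\M_i,\e^k\UU_k]$ has $\e$-degree $\geq i+1$ for every $k\geq 1$, I may commute $e^{\e^i\M_i}$ freely past $e^{\e\UU_1}\cdots e^{\e^{i-1}\UU_{i-1}}$, and the three remaining degree-$i$ factors $e^{\e^i\M_i}$, $e^{\e^i\UU_i}$, $e^{-\e^i\N_i}$ then combine additively (their pairwise commutators live in $\e^{2i}\g\subset\e^{i+1}\g$), yielding $\VV_i=\UU_i+\M_i-\N_i$.

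For $j>i$ I induct on $j$. Truncating modulo $\e^{j+1}$ restricts attention to factors of $\e$-degree at most $j$, already delivering $r$-independence. Iterated CBH collapses the left-hand side into a single exponential $e^F$ with $F=\sum_{a=1}^{j}\e^a F_a$, where each $F_a$ is a Lie polynomial in $\M_i,\ldots,\M_a,\N_i,\ldots,\N_a,\UU_1,\ldots,\UU_a$. The linear part of $F_j$ is exactly $\M_j-\N_j+\UU_j$, while every nontrivial bracket involving $\M_j$, $\N_j$, or $\UU_j$ has total $\e$-degree $\geq j+1$ and hence vanishes; thus $F_j=\M_j-\N_j+\UU_j+\widetilde\psi_j$ with $\widetilde\psi_j$ a bracketed Lie polynomial in the lower-index variables only, with no constant or linear term. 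Converting $e^F$ back to the canonical ordered form $e^{\e\VV_1}\cdots e^{\e^j\VV_j}$ by inverse CBH then expresses $\VV_j$ as $F_j$ plus a Lie polynomial in $F_1,\ldots,F_{j-1}$, and applying the inductive hypothesis shows that this correction is itself a Lie polynomial of the allowed shape, so everything bundles into the desired $\psi_j$. The main obstacle I anticipate is precisely this last bookkeeping step: verifying that neither the inverse-CBH correction nor the inductive substitution secretly re-introduces $\M_j$, $\N_j$, $\UU_j$ non-linearly, or any variable of index exceeding $j$. The argument rests on the strict degree inequality $a_1+\cdots+a_s\leq j$ forcing all indices to stay in range, but the interaction of two CBH expansions with the induction requires careful tracking to ensure no stray terms leak through.
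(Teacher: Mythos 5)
Your proposal is correct and follows essentially the same route as the paper: induction on $j$ combined with the Campbell--Hausdorff formula, taking logarithms modulo $\e^{j+1}$, and comparing coefficients of $\e^j$ via the weighting that assigns $\e$-degree $k$ to $\M_k,\N_k,\UU_k$. Your write-up merely makes explicit the coefficient-comparison and the $r$-independence bookkeeping that the paper's terse proof leaves to the reader.
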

\begin{proof}
We use induction on $j$. $j\leq i$ case is trivial. Otherwise, from (\ref{firstformula}) we have
\begin{align*}
e^{-\e^{j-1} \VV_{j-1}}\cdots e^{-\e \VV_1} e^{\e^i \M_i}\cdots e^{\e^{r-1} \M_{r-1}} e^{\e \UU_1} \cdots e^{\e^{r-1} \UU_{r-1}}e^{-\e^{r-1} \N_{r-1}}\cdots e^{-\e^i \N_i}
\\= e^{\e^j \VV_j} \cdots e^{\e^{r-1} \VV_{r-1}} \mod \e^r
\end{align*}
We take the logarithm on both sides (it is also a polynomial operation since $\e^{r}=0$) and compare the coefficients of $\e^j$. Then the result follows from (the existence of) the Campbell-Hausdorff formula and induction hypothesis.
\end{proof}
\begin{lemma}\label{formula1.5} Suppose (\ref{firstformula}) is given and keep the notations above. Then we have
\begin{gather*}
\psi_j(t^i\M_i, \cdots, t^{j-1}\M_{j-1},t^i\N_i, \cdots, t^{j-1}\N_{j-1}, t\UU_1, \cdots, t^{j-1}\UU_{j-1}) 
\\= t^j\psi_j(\M_i, \cdots, \M_{j-1},\N_i, \cdots, \N_{j-1}, \UU_1, \cdots, \UU_{j-1})
\end{gather*}
where $t$ is an indeterminate which commutes with all the other variables. In other words, if we decree that the degree of $\M_k, \N_k, \UU_k$ is $k$ for $k \geq 1$, then 
\begin{displaymath}
\psi_j(\M_i, \cdots, \M_{j-1},\N_i, \cdots, \N_{j-1}, \UU_1, \cdots, \UU_{j-1})
\end{displaymath} is a homogeneous polynomial of degree $j$. Furthermore, for $j >i$ we have
\begin{displaymath}
\e^j\VV_j = \e^j\UU_j +  \e^j\M_j-\e^j\N_j+\psi_j(\e^i\M_i, \cdots, \e^{j-1}\M_{j-1},\e^i\N_i, \cdots, \e^{j-1}\N_{j-1}, \e\UU_1, \cdots, \e^{j-1}\UU_{j-1}).
\end{displaymath}
\end{lemma}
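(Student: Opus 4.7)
The plan is to deduce the homogeneity of $\psi_j$ from a scaling/change-of-variable applied to equation (\ref{firstformula}), and then derive the ``$\e$-weighted'' form as a direct corollary.

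First I would introduce a formal indeterminate $t$ commuting with all the $\M_k$, $\N_k$, $\UU_k$, $\VV_k$ and with $\e$, and apply to (\ref{firstformula}) the substitution $\e \mapsto t\e$. Since every exponent in (\ref{firstformula}) is of the form $\e^k Z$ with $Z \in \{\M_k, \N_k, \UU_k, \VV_k\}$ (and $k \le r-1$), the substitution rewrites each such exponent as $\e^k(t^k Z)$, and since the truncation ``$\mod \e^r$'' is preserved by $\e \mapsto t\e$ (we land in $\k[t][[\e]]/(\e^r)$, where only words of $\e$-degree $<r$ survive on either side), we obtain a valid equation of the same shape as (\ref{firstformula}), but with every $\M_k,\N_k,\UU_k$ replaced by $t^k\M_k,t^k\N_k,t^k\UU_k$ and every $\VV_k$ replaced by $t^k\VV_k$.

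Next I would apply Lemma \ref{formula1} to this scaled equation. The lemma expresses $\VV_j$ (for $j>i$) as a universal Lie polynomial in the inputs, so feeding in $t^k\M_k$, $t^k\N_k$, $t^k\UU_k$ forces the output $j$-th slot to be
$$t^j\UU_j + t^j\M_j - t^j\N_j + \psi_j(t^i\M_i,\ldots,t^{j-1}\M_{j-1},t^i\N_i,\ldots,t^{j-1}\N_{j-1},t\UU_1,\ldots,t^{j-1}\UU_{j-1}).$$
But by the previous paragraph this same quantity equals $t^j\VV_j$, and substituting the original formula $\VV_j = \UU_j+\M_j-\N_j+\psi_j(\M_i,\ldots,\UU_{j-1})$ from Lemma \ref{formula1} gives
$$t^j\psi_j(\M_i,\ldots,\UU_{j-1}) = \psi_j(t^i\M_i,\ldots,t^{j-1}\M_{j-1},t^i\N_i,\ldots,t^{j-1}\N_{j-1},t\UU_1,\ldots,t^{j-1}\UU_{j-1}),$$
which is exactly the homogeneity statement, and equivalently says $\psi_j$ is homogeneous of degree $j$ under the weighting $\deg\M_k=\deg\N_k=\deg\UU_k=k$.

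For the final displayed equation, I would take the formula of Lemma \ref{formula1} for $j>i$, multiply both sides by $\e^j$, and then rewrite $\e^j\psi_j(\M_i,\ldots,\UU_{j-1})$ using the homogeneity just proved (applied in the polynomial ring with $t$ specialized to $\e$, which is legitimate because $\psi_j$ is a polynomial identity that holds universally). There is no real obstacle here: the only step that requires any care is verifying that the substitution $\e\mapsto t\e$ is compatible with the truncation $\mod \e^r$ and that every variable in sight commutes with $t$, so that Lemma \ref{formula1} can be applied verbatim to the rescaled equation.
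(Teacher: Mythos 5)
Your proposal is correct and follows essentially the same route as the paper: rescale $\e$, observe that this replaces each $\M_k,\N_k,\UU_k,\VV_k$ by its $k$-th-power-weighted version, apply Lemma \ref{formula1} to the rescaled equation, and compare. The only (immaterial) difference is that you use a formal indeterminate $t$ directly, while the paper substitutes scalars $\alpha\in\mathbb{Q}^{\times}$ and concludes from the identity holding for all such $\alpha$.
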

\begin{proof} Note that the other assertions clearly follow from the first one. If we replace $\e$ by $\alpha\e$ for some $\alpha \in \mathbb{Q}^\times$, then (\ref{firstformula}) becomes
\begin{align*}
e^{(\alpha\e)^i \M_i}\cdots e^{(\alpha\e)^{r-1} \M_{r-1}} e^{(\alpha\e) \UU_1} \cdots e^{(\alpha\e)^{r-1} \UU_{r-1}}e^{-(\alpha\e)^{r-1} \N_{r-1}}\cdots e^{-(\alpha\e)^i \N_i}
\\= e^{(\alpha\e) \VV_1} \cdots e^{(\alpha\e)^{r-1} \VV_{r-1}} \mod \e^r 
\end{align*}
Therefore we have
\begin{align*}
e^{\e^i (\alpha^i\M_i)}\cdots e^{\e^{r-1} (\alpha^{r-1}\M_{r-1})} e^{\e (\alpha\UU_1)} \cdots e^{\e^{r-1} (\alpha^{r-1}\UU_{r-1})}e^{-\e^{r-1} (\alpha^{r-1}\N_{r-1})}\cdots e^{-\e^i (\alpha^i\N_i)}
\\= e^{\e (\alpha\VV_1)} \cdots e^{\e^{r-1} (\alpha^{r-1}\VV_{r-1})}  \mod \e^r 
\end{align*}
By Lemma \ref{formula1} we have
\begin{align*}
&\alpha^j\VV_j = \alpha^j\UU_j,&j<i\\
&\alpha^j\VV_j = \alpha^j\UU_j + \alpha^j\M_j-\alpha^j\N_j,&j=i\\
&\alpha^j\VV_j = \alpha^j\UU_j + \alpha^j\M_j-\alpha^j\N_j\\
&\qquad\qquad+\psi_j(\alpha^i\M_i, \cdots, \alpha^{j-1}\M_{j-1},\alpha^i\N_i, \cdots, \alpha^{j-1}\N_{j-1}, \alpha\UU_1, \cdots, \alpha^{j-1}\UU_{j-1}),&j >i
\end{align*}
Thus in particular we have
\begin{gather*}
\psi_j(\alpha^i\M_i, \cdots, \alpha^{j-1}\M_{j-1},\alpha^i\N_i, \cdots, \alpha^{j-1}\N_{j-1}, \alpha\UU_1, \cdots, \alpha^{j-1}\UU_{j-1}) 
\\= \alpha^j\psi_j(\M_i, \cdots, \M_{j-1},\N_i, \cdots, \N_{j-1}, \UU_1, \cdots, \UU_{j-1})
\end{gather*}
Since it is true for all $\alpha \in \mathbb{Q}^\times$, the result follows.
\end{proof}

\begin{lemma} \label{formula2} Suppose (\ref{firstformula}) is given and keep the notations above. For $2i<j\leq r-1$, $\VV_j - [\N_i, \UU_{j-i}]$ can be expressed as a polynomial without a term that involves both one of $\UU_{j-i}, \cdots, \UU_{r-1}$ and one of $\M_i, \cdots, \M_{r-1}, \N_i, \cdots, \N_{r-1}$.
\end{lemma}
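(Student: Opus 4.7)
The plan is to combine the homogeneity from Lemma~\ref{formula1.5} with an argument via specialization of \eqref{firstformula}. By Lemma~\ref{formula1.5}, $\VV_j$ is a homogeneous Lie polynomial of degree $j$ in the grading $\deg \M_k = \deg \N_k = \deg \UU_k = k$. Any monomial of $\VV_j$ involving both some $\UU_a$ with $a \ge j-i$ and some $\M_b$ or $\N_b$ with $b \ge i$ has total degree at least $(j-i)+i = j$; equality forces the monomial to involve exactly one $\UU_{j-i}$ bracketed with $\M_i$ or $\N_i$ and no other variables. Hence the mixed part of $\VV_j$ lies in the two-dimensional span of $[\M_i, \UU_{j-i}]$ and $[\N_i, \UU_{j-i}]$; writing it as $c_1 [\M_i, \UU_{j-i}] + c_2 [\N_i, \UU_{j-i}]$, the lemma reduces to showing $c_1 = 0$ and $c_2 = 1$.

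Both coefficients can be extracted by specializing \eqref{firstformula}. For $c_1$, set $\N_k = 0$ for all $k$, $\M_k = 0$ for $k \ne i$, and $\UU_k = 0$ for $k \ne j-i$. Using Lemma~\ref{formula1} together with the fact that a nontrivial Lie polynomial in a single variable of non-matching degree vanishes, one checks that under this specialization the only nonzero $\VV_k$ with $k < j$ are $\VV_i = \M_i$ and $\VV_{j-i} = \UU_{j-i}$. Equation \eqref{firstformula} therefore collapses to
\[
e^{\e^i \M_i} e^{\e^{j-i} \UU_{j-i}} \equiv e^{\e^i \M_i} e^{\e^{j-i} \UU_{j-i}} \cdot e^{\e^j \VV_j} \cdots e^{\e^{r-1} \VV_{r-1}} \pmod{\e^r},
\]
forcing $\VV_j = 0$ and hence $c_1 = 0$. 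For $c_2$, specialize instead with $\M_k = 0$ for all $k$, $\N_k = 0$ for $k \ne i$, and $\UU_k = 0$ for $k \ne j-i$; the same reasoning gives $\VV_i = -\N_i$ and $\VV_{j-i} = \UU_{j-i}$, after which \eqref{firstformula} rearranges to
\[
e^{\e^j \VV_j} \cdots e^{\e^{r-1} \VV_{r-1}} \equiv [e^{-\e^{j-i}\UU_{j-i}},\, e^{\e^i \N_i}] \pmod{\e^r},
\]
the right-hand side being a group commutator. Taking logarithms via the Campbell--Hausdorff formula and reading off the $\e^j$ coefficient yields $\VV_j = [\N_i, \UU_{j-i}]$, hence $c_2 = 1$.

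The main technical obstacle is the Campbell--Hausdorff bookkeeping in the second specialization: one must argue that no higher nested commutator contributes at order $\e^j$ to the logarithm of the group commutator. This is precisely where the hypothesis $2i < j$ enters. A nested bracket containing $p$ copies of $\e^{j-i}\UU_{j-i}$ and $q$ copies of $\e^i\N_i$ has $\e$-degree $p(j-i) + qi$, and under $j > 2i$ the only solution with $p + q \ge 2$ for which this equals $j$ is $(p, q) = (1, 1)$. Consequently the leading bracket $[-\e^{j-i}\UU_{j-i}, \e^i\N_i] = \e^j [\N_i, \UU_{j-i}]$ furnishes the sole contribution at order $\e^j$, completing the argument.
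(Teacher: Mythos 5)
Your argument is correct and follows essentially the same route as the paper: both use the homogeneity of Lemma~\ref{formula1.5} to pin the mixed part of $\VV_j$ down to a degree-$j$ bilinear expression in $\UU_{j-i}$ and $\M_i, \N_i$, and both evaluate that expression by specializing the remaining variables to zero and applying the Campbell--Hausdorff formula. The only cosmetic differences are that you run two separate specializations (one extracting the $\M_i$-coefficient, one the $\N_i$-coefficient) and invoke the multigrading of the free Lie algebra to reduce the mixed part to the span of the two brackets, whereas the paper performs a single combined specialization and works with the four associative monomials $\UU_{j-i}\M_i, \M_i\UU_{j-i}, \UU_{j-i}\N_i, \N_i\UU_{j-i}$.
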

\begin{proof} From (\ref{firstformula}) we have the following equation.
\begin{align*}
e^{-\e^{j-1} \VV_{j-1}}\cdots e^{-\e \VV_1} e^{\e^i \M_i}\cdots e^{\e^{r-1} \M_{r-1}} e^{\e \UU_1} \cdots e^{\e^{r-1} \UU_{r-1}}e^{-\e^{r-1} \N_{r-1}}\cdots e^{-\e^i \N_i}
\\= e^{\e^j \VV_j} \cdots e^{\e^{r-1} \VV_{r-1}} \mod \e^r
\end{align*}
By setting $r=j+1$, we have
\begin{align*}
e^{-\e^{j-1} \VV_{j-1}}\cdots e^{-\e \VV_1} e^{\e^i \M_i}\cdots e^{\e^{j} \M_{j}} e^{\e \UU_1} \cdots e^{\e^{j} \UU_{j}}e^{-\e^{j} \N_{j}}\cdots e^{-\e^i \N_i}= e^{\e^j \VV_j}\mod \e^{j+i}
\end{align*}
Thus
\begin{equation} \label{instead}
\e^j \VV_j = \log(e^{-\e^{j-1} \VV_{j-1}}\cdots e^{-\e \VV_1} e^{\e^i \M_i}\cdots e^{\e^{j} \M_{j}} e^{\e \UU_1} \cdots e^{\e^{j} \UU_{j}}e^{-\e^{j} \N_{j}}\cdots e^{-\e^i \N_i}) \mod \e^{j+1}.
\end{equation}
Using Lemma \ref{formula1} and \ref{formula1.5} we replace each $\e\VV_1, \cdots, \e^{j-1}\VV_{j-1}$ on the right hand side by 
\begin{align*}
&\e^j\VV_j = \e^j\UU_j,&j<i\\
&\e^j\VV_j = \e^j\UU_j + \e^j\M_j-\e^j\N_j,&j=i\\
&\e^j\VV_j = \e^j\UU_j + \e^j\M_j-\e^j\N_j\\
&\qquad\qquad+\psi_j(\e^i\M_i, \cdots, \e^{j-1}\M_{j-1},\e^i\N_i, \cdots, \e^{j-1}\N_{j-1}, \e\UU_1, \cdots, \e^{j-1}\UU_{j-1}),&j >i
\end{align*}
and collect terms that both one of $\UU_{j-i}, \cdots, \UU_{r-1}$ and one of $\M_i, \cdots, \M_{r-1}, \N_i, \cdots, \N_{r-1}$ appear. Then modulo $\e^{j+1}$ their sum is of the form 
\begin{displaymath}
a(\e^{j-i}\UU_{j-i})(\e^i\M_i) + b(\e^i\M_i)(\e^{j-i}\UU_{j-i}) + c(\e^{j-i}\UU_{j-i})(\e^i\N_i) + d(\e^i\N_i)(\e^{j-i}\UU_{j-i})
\end{displaymath}
or equivalently
\begin{displaymath}
\e^j (a\UU_{j-i}\M_i + b\M_i\UU_{j-i} + c\UU_{j-i}\N_i + d\N_i\UU_{j-i})
\end{displaymath}
for some $a, b, c, d \in \mathbb{Q}$, which we denote by $\tilde{\VV}_j$. Thus $\VV_j - \tilde{\VV}_j$ can be expressed as a polynomial without a term that involves both one of $\UU_{j-i}, \cdots, \UU_{r-1}$ and one of $\M_i, \cdots, \M_{r-1}, \N_i, \cdots, \N_{r-1}$. 

We calculate $\tilde{\VV}_j$. First note that $\psi_{k}(\M_i, \cdots, \M_{k-1},\N_i, \cdots, \N_{k-1}, \UU_1, \cdots, \UU_{k-1})$ is a Lie polynomial without a constant or linear term, and if $k<j$ then it can be expressed without a term that involves both one of $\UU_{j-i}, \cdots, \UU_{r-1}$ and one of $\M_i, \cdots, \M_{r-1}, \N_i, \cdots, \N_{r-1}$. Indeed, this polynomial is homogeneous of degree $k$ with respect to the degree defined in Lemma \ref{formula1.5}, but any term that involves both one of $\UU_{j-i}, \cdots, \UU_{r-1}$ and one of $\M_i, \cdots, \M_{r-1}, \N_i, \cdots, \N_{r-1}$ is of degree $\geq j$. Thus instead of (\ref{instead}) it is equivalent to collect terms that both one of $\UU_{j-i}, \cdots, \UU_{r-1}$ and one of $\M_i, \cdots, \M_{r-1}, \N_i, \cdots, \N_{r-1}$ appear in the following expression given by removing $\psi_{i+1}, \cdots, \psi_{j-1}$.
\begin{equation} \label{instead2}
\begin{aligned}
\log(e^{-\e^{j-1} (\UU_{j-1}+\M_{j-1}-\N_{j-1})}\cdots e^{-\e^{i} (\UU_{i}+\M_{i}-\N_{i})}e^{-\e^{i-1} \UU_{i-1}}\cdots e^{-\e \UU_1} 
\\e^{\e^i \M_i}\cdots e^{\e^{j} \M_{j}} e^{\e \UU_1} \cdots e^{\e^{j} \UU_{j}}e^{-\e^{j} \N_{j}}\cdots e^{-\e^i \N_i}) \mod \e^{j+1}.
\end{aligned}
\end{equation}

Also, as $\tilde{\VV}_j$ can be expressed with indeterminates $\UU_{j-i}, \M_i, \N_i$, the values $a, b, c, d$ are unchanged if we simply put $\UU_k=0$ for $k\neq j-i$ and $\M_{i+1} = \cdots = \M_{r-1} = \N_{i+1}=\cdots = \N_{r-1} = 0$. It means instead of (\ref{instead2}) it suffices to collect terms which both one of $\UU_{j-i}, \cdots, \UU_{r-1}$ and one of $\M_i, \cdots, \M_{r-1}, \N_i, \cdots, \N_{r-1}$ appear in the following expression.
$$\log(e^{-\e^{j-i}\UU_{j-i}}e^{-\e^{i}(\M_i-\N_i)}e^{\e^i \M_i} e^{\e^{j-i} \UU_{j-i}}e^{-\e^i \N_i}) \mod \e^{j+1}.$$
(Note that $j-i>i$ by assumption.) By the Campbell-Hausdorff formula, we see that $\tilde{\VV}_j = [\N, \UU_{j-i}]$ satisfies the desired property.
\end{proof}

\begin{lemma}\label{formula1.75} Suppose (\ref{firstformula}) is given and keep the notations above. Then for $j > i$, we have
\begin{displaymath}\psi_j(0, \cdots, 0,0, \cdots, 0, \UU_1, \cdots, \UU_{j-1})=0.
\end{displaymath}
\end{lemma}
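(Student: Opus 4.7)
The idea is to specialize equation (\ref{firstformula}) by setting all the $\M$'s and $\N$'s to zero, and then invoke the uniqueness of the ``exponential coordinates'' of Lemma \ref{formula1}.

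Concretely, the plan is as follows. First I set $\M_i = \cdots = \M_{r-1} = \N_i = \cdots = \N_{r-1} = 0$ throughout (\ref{firstformula}). The left-hand side collapses to $e^{\e \UU_1} \cdots e^{\e^{r-1}\UU_{r-1}}$, so if we denote by $\VV_k' := \VV_k|_{\M = \N = 0}$ the corresponding specialized Lie polynomials, the equation becomes
\begin{equation*}
e^{\e \UU_1} \cdots e^{\e^{r-1} \UU_{r-1}} \;=\; e^{\e \VV_1'} \cdots e^{\e^{r-1} \VV_{r-1}'} \mod \e^r.
\end{equation*}

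Next I argue by induction on $j$ (with $1 \leq j \leq r-1$) that $\VV_j' = \UU_j$. The base case $j = 1$ follows by reading off the coefficient of $\e$. For the inductive step, assuming $\VV_k' = \UU_k$ for all $k < j$, I multiply the displayed identity on the left by $(e^{\e\UU_1} \cdots e^{\e^{j-1}\UU_{j-1}})^{-1}$ to obtain
\begin{equation*}
e^{\e^j \UU_j} \cdots e^{\e^{r-1} \UU_{r-1}} \;=\; e^{\e^j \VV_j'} \cdots e^{\e^{r-1} \VV_{r-1}'} \mod \e^r,
\end{equation*}
and comparing the coefficient of $\e^j$ on both sides gives $\VV_j' = \UU_j$.

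Finally, for $j > i$, Lemma \ref{formula1} specialized at $\M = \N = 0$ reads
\begin{equation*}
\VV_j' = \UU_j + \psi_j(0, \cdots, 0, 0, \cdots, 0, \UU_1, \cdots, \UU_{j-1}),
\end{equation*}
so combining this with $\VV_j' = \UU_j$ yields $\psi_j(0, \cdots, 0, 0, \cdots, 0, \UU_1, \cdots, \UU_{j-1}) = 0$, as desired. The entire argument is a uniqueness statement for exponential coordinates truncated mod $\e^r$; no genuine obstacle arises, the only mild subtlety being to justify the cancellation step as an identity of formal polynomials in $\UU_1, \ldots, \UU_{r-1}$ (which is immediate since every quantity involved is polynomial in $\e$ modulo $\e^r$).
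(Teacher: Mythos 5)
Your proposal is correct and is exactly the paper's argument: the paper's proof simply says the claim is straightforward after replacing all the $\M_k$ and $\N_k$ by $0$ in (\ref{firstformula}), which is precisely the specialization-plus-uniqueness-of-exponential-coordinates argument you spell out. You have merely made explicit the inductive cancellation step that the paper leaves to the reader.
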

\begin{proof} It is straightforward if we replace $\M_i, \cdots, \M_{r-1}, \N_i, \cdots, \N_{r-1}$ by $0$ on (\ref{firstformula}).
\end{proof}

\begin{lemma}\label{formula1.875} Suppose (\ref{firstformula}) is given and keep the notations above. Then for $j > i$, 
\begin{displaymath}
\psi_j(\M_i, \cdots, \M_{j-1},\N_i, \cdots, \N_{j-1}, \UU_1, \cdots, \UU_{j-1})
\end{displaymath}
can be expressed as a Lie polynomial each of whose terms contains at least one of $\M_i, \cdots, \M_{j-1},$ $\N_i, \cdots, \N_{j-1}$.
\end{lemma}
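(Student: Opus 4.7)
The plan is to invoke Lemma \ref{formula1.75} after decomposing $\psi_j$ into a piece that involves only $\UU$'s and a piece that involves at least one $\M$ or $\N$. By Lemma \ref{formula1}, $\psi_j$ is a Lie polynomial in $\M_i, \ldots, \M_{j-1}, \N_i, \ldots, \N_{j-1}, \UU_1, \ldots, \UU_{j-1}$ without constant or linear term. Fixing any such expression of $\psi_j$ as a $\mathbb{Q}$-linear combination of iterated Lie brackets in these indeterminates, I would separate the summation into
\[
\psi_j = S_{\mathrm{mix}} + S_{\mathrm{pure}},
\]
where $S_{\mathrm{mix}}$ collects those Lie monomials that contain at least one of $\M_i, \ldots, \M_{j-1}, \N_i, \ldots, \N_{j-1}$, and $S_{\mathrm{pure}}$ collects those monomials involving only $\UU_1, \ldots, \UU_{j-1}$. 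This separation is formal and requires no argument.

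Next, I would specialize $\M_i = \cdots = \M_{j-1} = \N_i = \cdots = \N_{j-1} = 0$. Each Lie monomial in $S_{\mathrm{mix}}$ contains at least one variable being set to $0$, so the whole of $S_{\mathrm{mix}}$ vanishes under this substitution, while $S_{\mathrm{pure}}$ is untouched, yielding
\[
\psi_j(0, \ldots, 0, 0, \ldots, 0, \UU_1, \ldots, \UU_{j-1}) = S_{\mathrm{pure}}(\UU_1, \ldots, \UU_{j-1}).
\]
By Lemma \ref{formula1.75}, the left hand side is $0$. Since $\UU_1, \ldots, \UU_{j-1}$ are independent formal non-commutative indeterminates (they arise as free symbols in the defining identity (\ref{firstformula})), this means $S_{\mathrm{pure}} = 0$ as an element of the free Lie algebra on $\UU_1, \ldots, \UU_{j-1}$. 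Consequently $\psi_j = S_{\mathrm{mix}}$, giving exactly the Lie-polynomial expression required by the lemma.

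I do not foresee any serious obstacle here: the statement is essentially an immediate bookkeeping consequence of the two preceding lemmas. The only point worth being careful about is the interpretation of the variables as formal non-commutative indeterminates throughout, so that the decomposition into ``pure'' and ``mixed'' parts and the specialization to $0$ are well-defined operations on the Lie polynomial itself, rather than merely on its values in some ambient Lie algebra. Since both Lemma \ref{formula1} and Lemma \ref{formula1.75} are already phrased at this formal level, this interpretation is automatic and the proof becomes a one-line consequence once the decomposition is in place.
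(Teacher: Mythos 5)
Your proof is correct and is essentially identical to the paper's: the paper also writes $\psi_j = A + B$ with $A$ the mixed part and $B$ the pure-$\UU$ part, and concludes $B=0$ from Lemma \ref{formula1.75}. Your write-up just spells out the specialization step more explicitly.
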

\begin{proof} It is a natural consequence of Lemma \ref{formula1.75}. Suppose we have an expression $A+B$ of $\psi_j(\M_i, \cdots, \M_{j-1},\N_i, \cdots, \N_{j-1}, \UU_1, \cdots, \UU_{j-1})$ where $A$ and $B$ are expressions of Lie polynomials, all the terms of $A$ contain at least one of $\M_i, \cdots, \M_{j-1},\N_i, \cdots, \N_{j-1}$, and $B$ consists only of terms in variables $\UU_1, \cdots, \UU_{j-1}$. Then by Lemma \ref{formula1.75}, we have $B=0$, thus the result follows.
\end{proof}

From now on, instead of (\ref{firstformula}) we assume the following equation is given.
\begin{equation}\label{secondformula}
e^{\e^i \M}e^{\e \UU_1} \cdots e^{\e^{r-1} \UU_{r-1}}e^{-\e^i \N}= e^{\e \VV_1} \cdots e^{\e^{r-1} \VV_{r-1}} \mod \e^r 
\end{equation}

\begin{lemma}\label{formula3} Suppose (\ref{secondformula}) is given. For $i<j<2i$, we have 
\begin{displaymath}
\VV_j = \UU_j + \sum_{\lambda \vdash j-i} \frac{(-1)^{\ell(\lambda)}}{\prod_{i \geq 1} \lambda(i)!}\ola{ad}^\lambda_{\UU} (\M)\end{displaymath}
where the sum is over all partitions of $j-i$ and
\begin{itemize}
\item[-] $\ell(\lambda) = \lambda'_1$ denotes the length of $\lambda$,
\item[-] $\lambda(i)$ denotes the number of $i$ in $\lambda$,
\item[-] $\ola{ad}^\lambda_\UU \colonequals \cdots ad_{\UU_i}^{\lambda(i)}\cdots ad_{\UU_1}^{\lambda(1)}$.
\end{itemize}
\end{lemma}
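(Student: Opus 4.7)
The plan is to isolate the $\M$-linear and $\N$-linear parts of $\VV_j$ and compute each separately. Regarding (\ref{secondformula}) as the specialization of (\ref{firstformula}) with $\M_i = \M$, $\N_i = \N$, and $\M_k = \N_k = 0$ for $k > i$, Lemmas \ref{formula1}, \ref{formula1.5}, and \ref{formula1.875} together give $\VV_j = \UU_j + \psi_j$ for $j > i$, where $\psi_j$ is a Lie polynomial, homogeneous of total degree $j$ (with $\M, \N$ of degree $i$ and $\UU_k$ of degree $k$), each of whose terms contains at least one of $\M, \N$. For $i < j < 2i$ any term containing two factors from $\{\M, \N\}$ already has degree $\geq 2i > j$, so every surviving term is linear in exactly one of $\M, \N$, and we may write $\psi_j = \psi_j^{\M} + \psi_j^{\N}$, computable by setting $\N = 0$ and $\M = 0$ in (\ref{secondformula}) respectively.

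For the $\N$-part, setting $\M = 0$ and writing $U = e^{\e\UU_1}\cdots e^{\e^{r-1}\UU_{r-1}}$, the left-hand side of (\ref{secondformula}) becomes $U(1 - \e^i \N)$ modulo $(\e^{2i}, \N^2)$. Writing $\VV_l = \UU_l + \delta_l^{\N}$ with $\delta_l^{\N} = 0$ for $l < i$, I would establish the first-variation identity
\begin{displaymath}
\prod_l e^{\e^l(\UU_l + \delta_l^{\N})} \equiv U\Bigl(1 + \sum_l \e^l \delta_l^{\N}\Bigr) \pmod{\e^{2i}, (\delta^{\N})^2}
\end{displaymath}
by Duhamel; the point is that every commutator correction that arises from moving an $\e^l\delta_l^{\N}$ past an $e^{\e^{l'}\UU_{l'}}$ (both necessarily with index $\geq i$) picks up an extra factor $\e^{l'}$, producing total $\e$-weight at least $2i$. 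Matching the two sides then forces $\delta_i^{\N} = -\N$ and $\delta_l^{\N} = 0$ for all other $l < 2i$, so $\psi_j^{\N} = 0$.

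For the $\M$-part, set $\N = 0$ and apply the conjugation identity $e^{\e^i \M}\cdot U = U \cdot e^{\e^i \mathrm{Ad}(U^{-1})\M}$ with $\mathrm{Ad}(U^{-1}) = e^{-\e^{r-1}\mathrm{ad}_{\UU_{r-1}}}\cdots e^{-\e\,\mathrm{ad}_{\UU_1}}$. Expanding each exponential and collecting coefficients: the $\e^k$ term of $\mathrm{Ad}(U^{-1})\M$ sums over multi-indices $(n_1, \dots, n_{r-1})$ with $\sum_m m\, n_m = k$, and identifying such a multi-index with the partition $\lambda$ of $k$ having $\lambda(m) = n_m$ yields $\mathrm{Ad}(U^{-1})\M = \sum_{k\geq 0}\e^k C_k$ where $C_0 = \M$ and
\begin{displaymath}
C_k = \sum_{\lambda\vdash k}\frac{(-1)^{\ell(\lambda)}}{\prod_m \lambda(m)!}\,\ola{ad}^\lambda_{\UU}(\M).
\end{displaymath}
Modulo $\M^2$ and $\e^{2i}$, we have $U \cdot e^{\e^i \mathrm{Ad}(U^{-1})\M} \equiv U\bigl(1 + \sum_{k=0}^{i-1}\e^{i+k}C_k\bigr)$; matching via the same variation identity as above gives $\delta_{i+k}^{\M} = C_k$ for $0 \leq k < i$, i.e.\ $\psi_j^{\M} = C_{j-i}$ for $i < j < 2i$. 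Combined with $\psi_j^{\N} = 0$ this yields the formula of the lemma. The main obstacle is the variation identity used in both paragraphs; its validity rests on the support condition $\delta_l = 0$ for $l < i$, which is what suppresses all Duhamel/BCH corrections modulo $\e^{2i}$.
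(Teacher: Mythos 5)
Your argument is correct, and its computational core is the same as the paper's: both reduce the claim to expanding $\mathrm{Ad}(U^{-1})\M = e^{-\e^{r-1}ad_{\UU_{r-1}}}\cdots e^{-\e\, ad_{\UU_1}}(\M)$ and reading off the coefficient of $\e^{j-i}$ as a sum over partitions of $j-i$ (your identification of multi-indices $(n_m)$ with partitions, the sign $(-1)^{\ell(\lambda)}$, and the denominator $\prod_m\lambda(m)!$ all check out). The difference lies in the reduction. The paper sets $r=2i$ (licit by the $r$-independence in Lemma \ref{formula1}) and uses the fact that modulo $\e^{2i}$ all factors $e^{\e^s X}$ with $s\geq i$ commute; this cancels the factors $e^{\e^l\UU_l}$ with $l\geq i$ against the right-hand side and absorbs $e^{-\e^i\N}$ into $e^{\e^i(\VV_i-\UU_i)}=e^{\e^i(\M-\N)}$ in one step, leaving the single conjugation $e^{-\e^{i-1}\UU_{i-1}}\cdots e^{-\e\UU_1}e^{\e^i\M}e^{\e\UU_1}\cdots e^{\e^{i-1}\UU_{i-1}}$. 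You instead linearize in $(\M,\N)$: the splitting $\psi_j=\psi_j^{\M}+\psi_j^{\N}$ is correctly justified by the weighted homogeneity of Lemma \ref{formula1.5} combined with Lemma \ref{formula1.875} (two factors from $\{\M,\N\}$ would force degree $\geq 2i>j$), and your Duhamel first-variation identity is sound because the perturbations $\delta_l$ are supported in $l\geq i$ and are only moved past factors of strictly larger index, so all corrections have $\e$-weight $\geq 2i+1$. Your route is longer but makes explicit both the vanishing of the $\N$-contribution and the reason no term quadratic in $\{\M,\N\}$ survives; the paper gets both for free from the commutativity trick. Both proofs are valid.
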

\begin{proof} Here we set $r=2i$. Then we have
\begin{align*}
&e^{-\e^{i-1} \VV_{i-1}}\cdots e^{-\e \VV_1} e^{\e^i \M} e^{\e \UU_1} \cdots e^{\e^{i-1} \UU_{i-1}}e^{-\e^i \N} = e^{\e^{i} (\VV_{i}-\UU_{i})} \cdots e^{\e^{2i-1} (\VV_{2i-1}-\UU_{2i-1})} \mod \e^{2i}.
\end{align*}
Here we use the fact that $e^{\e^s X}$ and $e^{\e^t Y}$ commute for $s+t\geq 2i$. By Lemma \ref{formula1}, it reads
\begin{align*}
&e^{-\e^{i-1} \UU_{i-1}}\cdots e^{-\e \UU_1} e^{\e^i \M} e^{\e \UU_1} \cdots e^{\e^{i-1} \UU_{i-1}} = e^{\e^{i} \M} e^{\e^{i+1} (\VV_{i+1}-\UU_{i+1})} \cdots e^{\e^{2i-1} (\VV_{2i-1}-\UU_{2i-1})} \mod \e^{2i}.
\end{align*}
Now the result follows from comparing coefficients of $\e^j$ on each side of the equation above and the following formula.
\begin{displaymath}
e^{X}Ye^{-X} = \sum_{i\geq 0} \frac{1}{i!}ad_X^i(Y)
\end{displaymath}
\end{proof}

\section{Vanishing of $M_i$} 
It is proved in \cite[4.4]{charring} that $M_{r-1}=0$. Thus here we assume that $r'' \leq i \leq r-2$ and prove $M_{i}=0$. We have
\begin{align*}
\mathcal{X}_i - \mathcal{X}_{i+1} = & \{(Tx, y, X_1, \cdots, X_{r-1}, Y_1, \cdots, Y_{r-1}) \in (T\backslash G)\times G \times \mathfrak{g}^{2r-2} \mid xyx^{-1} \in B, 
\\ &u_j({}^{y^{-1}}X_1, \cdots, {}^{y^{-1}}X_j, Y_1, \cdots, Y_j, X_1, \cdots, X_j) \in {}^{x^{-1}}\mathfrak{b} \textup{ for } 1 \leq j \leq i-1,
\\ &u_j({}^{y^{-1}}X_1, \cdots, {}^{y^{-1}}X_j, Y_1, \cdots, Y_j, X_1, \cdots, X_j) \notin {}^{x^{-1}}\mathfrak{b}  \textup{ for } j=i\}.
\end{align*}
Then the projection $\hat{\pi}: \mathcal{X}_i - \mathcal{X}_{i+1} \rightarrow G_r$ factors through
$\bar{\pi}: \mathcal{X}_i - \mathcal{X}_{i+1}\rightarrow (T\backslash G)\times G_r \rightarrow G_r$ where\begin{displaymath}
\bar{\pi}: (Tx, y, X_1, \cdots, X_{r-1}, Y_1, \cdots, Y_{r-1}) \mapsto (Tx, ye^{\e Y_1}\cdots e^{\e^{r-1} Y_{r-1}})
\end{displaymath}
and $(T\backslash G)\times G_r \rightarrow G_r$ is the second projection. Thus in order to prove $M_i=0$, it suffices to show that $\bar{\pi}_!((\hat{\iota}|_{\X_{i} - \X_{i+1}})^* \E \otimes \L_{\hat{h}|_{\X_{i} - \X_{i+1}}})=0$. Now we fix a fiber $\P = \bar{\pi}^{-1}(Tx, ye^{\e Y_1}\cdots e^{\e^{r-1} Y_{r-1}})$ for some fixed $Tx, y, Y_1, \cdots, Y_{r-1}$. Then it suffices to show that $(\bar{\pi}|_{\P})_!((\hat{\iota}|_{\P})^* \E \otimes \L_{\hat{h}|_{\P}})=0$, i.e.
\begin{displaymath} H^*_c(\P, (\hat{\iota}|_{\P})^* \E \otimes \L_{\hat{h}|_{\P}})=0.
\end{displaymath}
But since we fix $Tx$ and $y$, the pull-back of $\E$ under $\hat{\iota}$ is a constant sheaf on $\P$, thus it suffices to show that
\begin{displaymath} H^*_c(\P,  \L_{\hat{h}|_{\P}})=0.
\end{displaymath}
From now on we identify
\begin{align*}
\mathcal{P} = &\{(X_1, \cdots, X_{r-1}) \in \mathfrak{g}^{r-1} \mid
\\ &u_j({}^{y^{-1}}X_1, \cdots, {}^{y^{-1}}X_j, Y_1, \cdots, Y_j, X_1, \cdots, X_j) \in {}^{x^{-1}}\mathfrak{b} \textup{ for } 1 \leq j \leq i-1,
\\ &u_j({}^{y^{-1}}X_1, \cdots, {}^{y^{-1}}X_j, Y_1, \cdots, Y_j, X_1, \cdots, X_j) \notin {}^{x^{-1}}\mathfrak{b}  \textup{ for } j=i\}
\end{align*}
and fix a representative $x \in Tx$. Also, note that we have an isomorphism of varieties $\n^{i+1} \simeq {}^{x^{-1}}U_r^{r-1-i}$ given by 
\begin{equation}\label{niso}
(E_{r-1-i}, \cdots, E_{r-1}) \mapsto e^{\e^{r-1-i}({}^{x^{-1}}E_{r-1-i})}\cdots e^{\e^{r-1}({}^{x^{-1}}E_{r-1})}.\end{equation}
We define a free action of ${}^{x^{-1}}U_r^{r-1-i}$ on $\P$,
\begin{displaymath}(e^{\e^{r-1-i}({}^{x^{-1}}E_{r-1-i})}\cdots e^{\e^{r-1}({}^{x^{-1}}E_{r-1})})\cdot (X_1, \cdots, X_{r-1}) = (X'_1, \cdots, X'_{r-1})
\end{displaymath}
such that the following identity holds.
\begin{displaymath}(e^{\e^{r-1-i}({}^{x^{-1}}E_{r-1-i})}\cdots e^{\e^{r-1}({}^{x^{-1}}E_{r-1})})(e^{\e X_1} \cdots e^{\e^{r-1} X_{r-1}}) = e^{\e X'_1} \cdots e^{\e^{r-1} X'_{r-1}}\end{displaymath}
(In other words, it is just a "left multiplication".) At least it is clear that $X_j'$ are determined uniquely. We have
\begin{align}
&e^{\e ({}^{y^{-1}}X'_1)} \cdots e^{\e^{r-1} ({}^{y^{-1}}X'_{r-1})}e^{\e Y_1} \cdots e^{\e^{r-1} Y_{r-1}}(e^{\e X'_1} \cdots e^{\e^{r-1} X'_{r-1}})^{-1} \label{exp1}
\\&=(e^{\e^{r-1-i}({}^{(xy)^{-1}}E_{r-1-i})}\cdots e^{\e^{r-1}({}^{(xy)^{-1}}E_{r-1})})e^{\e \U_1} \cdots e^{\e^{r-1} \U_{r-1}}(e^{\e^{r-1-i}({}^{x^{-1}}E_{r-1-i})}\cdots e^{\e^{r-1}({}^{x^{-1}}E_{r-1})})^{-1} \nonumber
\end{align}
where $\U_j = u_j({}^{y^{-1}}X_1, \cdots, {}^{y^{-1}}X_j, Y_1, \cdots, Y_j, X_1, \cdots, X_j)$. If we define 
\begin{displaymath}\V_j = u_j({}^{y^{-1}}X'_1, \cdots, {}^{y^{-1}}X'_j, Y_1, \cdots, Y_j, X'_1, \cdots, X'_j)\end{displaymath}
then the expression (\ref{exp1}) is the same as $e^{\e \V_1} \cdots e^{\e^{r-1} \V_{r-1}}$.

The action is indeed well-defined; we need to check that $e^{\e \V_1} \cdots e^{\e^{r-1} \V_{r-1}} \in {}^{x^{-1}}B_rG_r^i - {}^{x^{-1}}B_rG_r^{i+1}.$ But it is clear since 
\begin{displaymath}
e^{\e^{r-1-i}({}^{x^{-1}}E_{r-1-i})}\cdots e^{\e^{r-1}({}^{x^{-1}}E_{r-1})}, \quad e^{\e^{r-1-i}({}^{(xy)^{-1}}E_{r-1-i})}\cdots e^{\e^{r-1}({}^{(xy)^{-1}}E_{r-1})}\end{displaymath} are in ${}^{x^{-1}}B_r$. (Here we use the fact that $xyx^{-1} \in T$ normalizes $\n$.)

Now we fix an ${}^{x^{-1}}U_r^{r-1-i}$-orbit of $(Tx, y, X_1, \cdots, X_{r-1}, Y_1, \cdots, Y_{r-1}) \in \mathcal{P}$, denoted by $\OO$. Then it suffices to show that
\begin{displaymath} H^*_c(\OO,  \L_{\hat{h}|_{\OO}})=0.
\end{displaymath}
But since the action is free, we may identify ${}^{x^{-1}}N_r^{r-1-i} \simeq \OO$ and furthermore $\n^{i+1} \simeq \OO$ by the isomorphism (\ref{niso}). Thus we may regard $\hat{h}$ as a function from $\n^{i+1}$ to $\k$, say,
\begin{displaymath}\hat{h}(E_{r-1-i}, \cdots, E_{r-1}) = \sum_{j=1}^{r-1} \langle {}^{x^{-1}}A_j, \V_j(E_{r-1-i}, \cdots, E_{r-1}) \rangle\end{displaymath}
where we also regard $\V_j$ as a function on $\n^{i+1}$ with $\U_j$ fixed. Now we need the following lemma. The proof below is due to G. Lusztig.
\begin{lemma} If $f : \mathbb{A}^N \rightarrow \k$ is a non-constant affine linear function, then
\begin{displaymath}
H_c^*(\mathbb{A}^N, \L_f) = 0.\end{displaymath}
\end{lemma}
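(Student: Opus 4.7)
The plan is to reduce the statement to the one-dimensional case via a linear change of coordinates and then invoke the standard vanishing for the Artin-Schreier sheaf on $\mathbb{A}^1$. Write $f(x_1,\ldots,x_N) = c + \sum_{i=1}^N a_i x_i$ with at least one $a_i$ nonzero; without loss of generality assume $a_1 \neq 0$. Then the map
$$\varphi : \mathbb{A}^N \to \mathbb{A}^N, \qquad (x_1,x_2,\ldots,x_N) \mapsto \bigl(a_1^{-1}(x_1 - c - \textstyle\sum_{i \geq 2} a_i x_i),\, x_2,\ldots, x_N\bigr),$$
is an automorphism of $\mathbb{A}^N$ over $\k$, and by construction $f \circ \varphi = x_1$. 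Hence $\varphi^* \L_f \simeq \L_{x_1}$, and since $\varphi$ is an isomorphism it induces an isomorphism on compactly supported cohomology. It therefore suffices to show $H_c^*(\mathbb{A}^N, \L_{x_1}) = 0$.

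Now I would write $\mathbb{A}^N = \mathbb{A}^1 \times \mathbb{A}^{N-1}$ with $x_1$ the coordinate on the first factor. The sheaf $\L_{x_1}$ is the pullback of the standard Artin-Schreier sheaf $\L_\psi \in \D(\mathbb{A}^1)$ along the first projection, so it is an external tensor product $\L_\psi \boxtimes \overline{\mathbb{Q}}_\ell$. The Künneth formula then gives
$$H_c^*(\mathbb{A}^N, \L_{x_1}) \;\simeq\; H_c^*(\mathbb{A}^1, \L_\psi) \otimes H_c^*(\mathbb{A}^{N-1}, \overline{\mathbb{Q}}_\ell),$$
so the claim reduces to the vanishing $H_c^*(\mathbb{A}^1, \L_\psi) = 0$ for the Artin-Schreier sheaf attached to a nontrivial additive character $\psi$, which is the standard input (see e.g.\ the references for the Artin-Schreier sheaf in \cite[0.3]{charring}).

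No real obstacle is expected: the only nonformal ingredient is the one-dimensional vanishing, which is a textbook computation using the Artin-Schreier covering $\mathbb{A}^1 \to \mathbb{A}^1$, $x \mapsto x^q - x$, together with the fact that $\psi$ is nontrivial (so the $\psi$-isotypic component of $H_c^*$ of the cover vanishes). Everything else is a routine change of variables plus Künneth.
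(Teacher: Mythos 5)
Your proof is correct. The first step --- using $a_1\neq 0$ to make an affine change of coordinates that turns $f$ into the coordinate function $x_1$ --- is the same normalization the paper performs, though the paper carries it out upstairs on the Artin--Schreier cover $\mathbb{A}^N_f=\{\lambda^q-\lambda=f(x)\}$ rather than on the base. Where you diverge is the endgame: you factor $\mathbb{A}^N=\mathbb{A}^1\times\mathbb{A}^{N-1}$, identify $\L_{x_1}$ as $\L_\psi\boxtimes\overline{\mathbb{Q}}_\ell$, apply K\"unneth, and quote the classical vanishing $H_c^*(\mathbb{A}^1,\L_\psi)=0$ as a black box. The paper instead stays with the $N$-dimensional cover, identifies it $\F_q$-equivariantly with $\mathbb{A}^{N-1}\times\k$ on which $\F_q$ acts by translation in the last coordinate, observes that this action extends to the connected group $\k$ and hence is trivial on $H_c^*$, and concludes that the $\psi$-isotypic part vanishes since $\psi$ is nontrivial. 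The two arguments are close cousins --- the paper's connectedness argument is exactly the standard proof of the one-dimensional vanishing you invoke, just run in all $N$ variables at once --- so the paper's version is self-contained and avoids K\"unneth, while yours is shorter modulo the cited input. Both are complete.
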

\begin{proof} We recall the construction of $\L_f$ in \cite[0.3]{charring}. We define
\begin{displaymath}
\mathbb{A}^N_f = \{(x, \lambda) \in \mathbb{A}^N \times \k \mid \lambda^q - \lambda = f(x)\}
\end{displaymath}
and let $a : \mathbb{A}^N_f \rightarrow \mathbb{A}^N$ be the projection on the first factor. Then $\L_f$ is defined as the $\psi$-eigenspace of $a_!\overline{\mathbb{Q}_\ell}$. Therefore, to this end it suffices to show that $H^*_c(\mathbb{A}^N, a_!\overline{\mathbb{Q}_\ell})$ has no such eigenspace, i.e. $\F_q$ acts trivially on $H^*_c(\mathbb{A}^N_f, \overline{\mathbb{Q}_\ell})$. 

We may write $f=\sum_{i=1}^n a_i x_i + c$ for some $a_i \in \k$ and without loss of generality we assume $a_1 \neq 0$. Then 
\begin{displaymath}
\mathbb{A}^N_f \rightarrow \mathbb{A}^{N-1} \times \k : (x_1, \cdots, x_n, \lambda) \mapsto (x_2, \cdots, x_n, \lambda)
\end{displaymath}
is an $\F_q$-equivariant isomorphism with inverse
\begin{displaymath}
 (x_2, \cdots, x_n, \lambda) \mapsto (-\frac{1}{a_1}(\sum_{i=2}^n a_i x_i + c - \lambda^q + \lambda), x_2, \cdots, x_n, \lambda)
\end{displaymath}
if we define an action of $x\in \F_q$ on $\mathbb{A}^{N-1}\times\k$ by the following.
$$x : (y_1, \cdots, y_{n-1}, \lambda) \mapsto (y_1, \cdots, y_{n-1}, \lambda+x)$$
However it is clear that this $\F_q$-action on $\mathbb{A}^{N-1} \times \k$ is naturally extended to that of $\k$. Since $\k$ is connected, it acts trivially on the cohomology and the result follows.
\end{proof}
Thus in our case it suffices to show that $\hat{h}(E_{r-1-i}, \cdots, E_{r-1})$ is a non-constant affine linear function on $\n^{i+1}$. We use Lemma \ref{formula1} to see that
\begin{align*}
&\sum_{j=1}^{r-1} \langle {}^{x^{-1}}A_j, \V_j(E_{r-1-i}, \cdots, E_{r-1})  \rangle 
\\&=\sum_{j=1}^{r-1} \langle {}^{x^{-1}}A_j, \U_j \rangle+ \sum_{j=r-1-i}^{r-1}\langle  {}^{x^{-1}}A_{j}, {}^{(xy)^{-1}}E_j- {}^{x^{-1}}E_j \rangle 
\\&+ \sum_{j=r-i}^{r-1} \langle  {}^{x^{-1}}A_j, \psi_j ({}^{(xy)^{-1}}E_{r-1-i}, \cdots, {}^{(xy)^{-1}}E_{j-1},{}^{x^{-1}}E_{r-1-i},\cdots, {}^{x^{-1}}E_{j-1}, \U_1, \cdots, \U_{j-1})\rangle
\end{align*}
where $\psi_j$ for $r-i \leq j \leq r-1$ is a Lie polynomial without a constant or linear term. But $\langle  {}^{x^{-1}}A_{j}, {}^{(xy)^{-1}}E_j- {}^{x^{-1}}E_j \rangle=0$ since ${}^{(xy)^{-1}}E_j- {}^{x^{-1}}E_j \in {}^{x^{-1}}\n$. Thus we have 
\begin{align*}
&\hat{h}(E_{r-1-i}, \cdots, E_{r-1}) =\sum_{j=1}^{r-1} \langle {}^{x^{-1}}A_j, \U_j \rangle
\\&+ \sum_{j=r-i}^{r-1} \langle  {}^{x^{-1}}A_j, \psi_j ({}^{(xy)^{-1}}E_{r-1-i}, \cdots, {}^{(xy)^{-1}}E_{j-1},{}^{x^{-1}}E_{r-1-i},\cdots, {}^{x^{-1}}E_{j-1}, \U_1, \cdots, \U_{j-1})\rangle.
\end{align*}

Note that $\U_1, \cdots, \U_{i-1}\in {}^{x^{-1}}\b$ normalize ${}^{x^{-1}}\n$. Thus by Lemma \ref{formula1.5}  and \ref{formula1.875}, for $j < r-1$ we have
\begin{displaymath}
\langle  {}^{x^{-1}}A_j, \psi_j ({}^{(xy)^{-1}}E_{r-1-i}, \cdots, {}^{(xy)^{-1}}E_{j-1},{}^{x^{-1}}E_{r-1-i},\cdots, {}^{x^{-1}}E_{j-1}, \U_1, \cdots, \U_{j-1})\rangle =0.
\end{displaymath}
Indeed, as shown in the proof of Lemma \ref{formula2}, $\psi_j$ can be expressed without a term that involves both one of $\U_{i}, \cdots, \U_{r-1}$ and one of ${}^{(xy)^{-1}}E_{r-1-i}, \cdots, {}^{(xy)^{-1}}E_{r-1},{}^{x^{-1}}E_{r-1-i},\cdots, {}^{x^{-1}}E_{r-1}$.
Also if $j = r-1$ we have
\begin{align*}
&\langle  {}^{x^{-1}}A_{r-1}, \psi_{r-1} ({}^{(xy)^{-1}}E_{r-1-i}, \cdots, {}^{(xy)^{-1}}E_{r-2},{}^{x^{-1}}E_{r-1-i},\cdots, {}^{x^{-1}}E_{r-2}, \U_1, \cdots, \U_{r-2})\rangle
\\&=\langle  {}^{x^{-1}}A_{r-1}, \psi_{r-1} ({}^{(xy)^{-1}}E_{r-1-i},0, \cdots,0,{}^{x^{-1}}E_{r-1-i},0,\cdots, 0, 0, \cdots,0, \U_i, 0, \cdots, 0)\rangle.
\end{align*}
Now we use Lemma \ref{formula2} to see that it is the same as
\begin{displaymath} \langle {}^{x^{-1}}A_{r-1},[{}^{x^{-1}}E_{r-1-i}, \U_{i}]\rangle\end{displaymath}
since $r-1 > 2(r-1-i)$. Thus
\begin{align*}
\hat{h}(E_{r-1-i}, \cdots, E_{r-1}) &= \sum_{j=1}^{r-1} \langle {}^{x^{-1}}A_j, \U_j \rangle + \langle  {}^{x^{-1}}A_{r-1}, [{}^{x^{-1}}E_{r-1-i}, \U_{i}] \rangle
\\&=\sum_{j=1}^{r-1} \langle {}^{x^{-1}}A_j, \U_j \rangle + \langle  [{}^{x^{-1}}A_{r-1}, {}^{x^{-1}}E_{r-1-i}], \U_{i} \rangle.
\end{align*} 
Note that 
\begin{gather*}
E \mapsto  [A_{r-1}, E] \mapsto [{}^{x^{-1}}A_{r-1}, {}^{x^{-1}}E]
\end{gather*}
is a vector space isomorphism from $\n$ to ${}^{x^{-1}}\n$. Since $\U_i \notin {}^{x^{-1}}\b$, we conclude that $\hat{h}$ is a non-constant affine linear function. Thus $M_i$ vanishes as desired.

\section{Vanishing of $M_i'$}
Since $M_0'=0$ is already shown in \cite[4.2]{charring}, here we assume $1 \leq i \leq r'-1$ and show $M_i'=0$. (This condition is vacuous if $r \leq 3$.) We have
\begin{align*}
\Y_i- \Y_{i+1} = & \{(Tx, y, X_1, \cdots, X_{r-1}, Y_1, \cdots, Y_{r-1}) \in (T\backslash G)\times G \times \mathfrak{g}^{2r-2} \mid xyx^{-1} \in T, 
\\ &u_j({}^{y^{-1}}X_1, \cdots, {}^{y^{-1}}X_j, Y_1, \cdots, Y_j, X_1, \cdots, X_j) \in {}^{x^{-1}}\mathfrak{t} \textup{ for } 1 \leq j \leq i-1,
\\ &u_j({}^{y^{-1}}X_1, \cdots, {}^{y^{-1}}X_j, Y_1, \cdots, Y_j, X_1, \cdots, X_j) \in {}^{x^{-1}}\mathfrak{b} - {}^{x^{-1}}\mathfrak{t}  \textup{ for } j=i,
\\ &u_j({}^{y^{-1}}X_1, \cdots, {}^{y^{-1}}X_j, Y_1, \cdots, Y_j, X_1, \cdots, X_j) \in {}^{x^{-1}}\mathfrak{b} \textup{ for } i+1 \leq j \leq r''-1 \}.
\end{align*}
We define a free action of $E\in \g$ on $\Y_i - \Y_{i+1}$,
\begin{displaymath}
E \cdot (Tx, y, X_1, \cdots, X_{r-1}, Y_1, \cdots, Y_{r-1}) = (Tx, y, X'_1, \cdots, X_{r-1}', Y_1, \cdots, Y_{r-1}),
\end{displaymath}
such that it satisfies the following equation.
\begin{displaymath}
e^{\e^{r-1-i}E}e^{\e X_1} \cdots e^{\e^{r-1} X_{r-1}} = e^{\e X'_1} \cdots e^{\e^{r-1} X'_{r-1}} \mod \e^r
\end{displaymath}
Note that $X_1', \cdots, X_{r-1}'$ is successively well-defined. Then we have
\begin{align}
&e^{\e ({}^{y^{-1}}X'_1)} \cdots e^{\e^{r-1} ({}^{y^{-1}}X'_{r-1})}e^{\e Y_1} \cdots e^{\e^{r-1} Y_{r-1}}(e^{\e X'_1} \cdots e^{\e^{r-1} X'_{r-1}})^{-1}\label{exp2}
\\&=e^{\e^{r-1-i}({}^{y^{-1}}E)} e^{\e \U_1} \cdots e^{\e^{r-1} \U_{r-1}} e^{-\e^{r-1-i}E} \nonumber
\end{align}
where $\U_j = u_j({}^{y^{-1}}X_1, \cdots, {}^{y^{-1}}X_j, Y_1, \cdots, Y_j, X_1, \cdots, X_j)$. If we define 
\begin{displaymath}
\V_j = u_j({}^{y^{-1}}X'_1, \cdots, {}^{y^{-1}}X'_j, Y_1, \cdots, Y_j, X'_1, \cdots, X'_j)\end{displaymath}
then the expression (\ref{exp2}) is the same as $e^{\e \V_1} \cdots e^{\e^{r-1} \V_{r-1}}$. Since $X_j = X_j'$ for $1 \leq j \leq r-2-i$ and especially for $1 \leq j \leq r''-1$, we also have $\U_j = \V_j$ in this range. It implies that the action is well-defined on $\Y_i - \Y_{i+1}$.

By the same reason as the previous chapter, for the proof of vanishing of $M'_i$ it suffices to show that $\hat{h}$ is a non-constant affine linear function on any orbit of $\g$ on $\Y_i - \Y_{i+1}$ under this action. In other words, if we regard $\hat{h}$ as a function of $E \in \g$ by the following formula,
\begin{displaymath}
\hat{h}(E) = \sum_{j=1}^{r-1} \langle {}^{x^{-1}}A_j, \V_j(E)\rangle
\end{displaymath}
it suffices to show that this is a non-constant affine linear function. Now we use Lemma \ref{formula3} to see that
\begin{align*}
\sum_{j=1}^{r-1} \langle {}^{x^{-1}}A_j, \V_j(E)\rangle &= \sum_{j=1}^{r-1} \langle {}^{x^{-1}}A_j, \U_j\rangle + \langle {}^{x^{-1}}A_{r-1-i}, {}^{y^{-1}}E - E \rangle 
\\&\quad + \sum_{j=r-i}^{r-1} \langle {}^{x^{-1}}A_{j}, \sum_{\lambda \vdash j-(r-1-i)} \frac{(-1)^{\ell(\lambda)}}{\prod_{i \geq 1} \lambda(i)!}\ola{ad}_{\U}^\lambda({}^{y^{-1}}E) \rangle
\\&= \sum_{j=1}^{r-1} \langle {}^{x^{-1}}A_j, \U_j\rangle - \langle {}^{x^{-1}}A_{r-1}, [\U_{i}, {}^{y^{-1}}E] \rangle
\\&= \sum_{j=1}^{r-1} \langle {}^{x^{-1}}A_j, \U_j\rangle - \langle [{}^{x^{-1}}A_{r-1}, \U_{i}], {}^{y^{-1}}E \rangle
\end{align*}
since $\langle {}^{x^{-1}}A_{r-1-i}, {}^{y^{-1}}E - E \rangle=0$ and $\U_j \in {}^{x^{-1}}\mathfrak{t}$ for $1 \leq j \leq i-1$. Now since $\U_{i} \notin {}^{x^{-1}}\mathfrak{t}$, $[{}^{x^{-1}}A_{r-1}, \U_{i}] \neq 0$, thus $ \langle [{}^{x^{-1}}A_{r-1}, \U_{i}], {}^{y^{-1}}E \rangle$ is a non-constant affine linear function. Thus we conclude that so is $\hat{h}(E)$ and the claim is proved.

\section{Equality of two characters}\label{equiv}
Theorem \ref{mainthm} has the following consequence which we explain in this section. We recall the diagram (\ref{diag1}).
\begin{displaymath}
\xymatrix{
&\X \ar[ld]_{\tau} \ar[rd]^{\pi} \ar[d]^{\rho}
\\T_r&\tilde{G}_r \ar[l]_{\tilde{\tau}} \ar[r]^{\tilde{\pi}}&G_r }
\end{displaymath}
We saw in (\ref{comiso}) and (\ref{comiso2}) that  $\pi_! (\iota^*\E \otimes \L_h)
 = \pi_! \tau^* (\E \boxtimes \L_{f})=\tilde{\pi}_! \tilde{\tau}^* (\E \boxtimes \L_{f})[-2d](-d)$ where $d=r \dim \b - \dim T$.
 
$\pi'_! (\iota'^*\E \otimes \L_{h'})$ also has a similar picture. We recall the definition of $\Y$
\begin{align*}
\Y \colonequals  &\{(Tg, g') \in (T\backslash G_r)\times G_r \mid gg'g^{-1} \in T_r B_r^{r'}G_r^{r''}\}.
\end{align*}
Now we define
$$
\tilde{G}_r' \colonequals \{(T_r B_r^{r'}G_r^{r''} g, g') \in (T_r B_r^{r'}G_r^{r''}\backslash G_r) \times G_r \mid gg'g^{-1} \in T_r B_r^{r'}G_r^{r''}\}.
$$
Then we have the following diagram
\begin{displaymath}
\xymatrix{
T_r&\tilde{G}_r' \ar[l]_{\tilde{\tau}'} \ar[r]^{\tilde{\pi}'}&G_r }
\end{displaymath}
where
\begin{gather*}
\tilde{\pi}'(T_r B_r^{r'}G_r^{r''}g, g') = g',\qquad \tilde{\tau}'(T_r B_r^{r'}G_r^{r''}g, g') = \sigma'(gg'g^{-1}).
\end{gather*}
Here $\sigma': T_rB_r^{r'}G_r^{r''} \rightarrow T_r$ is defined as follows. Under the isomorphism (\ref{giso}) we define
\begin{align*}
K = \{& (x, X_1, \cdots, X_{r-1}) \in G \times \g^{r-1} \mid
\\& x=1, X_1=\cdots= X_{r'-1}=0,  X_{r''}, \cdots, X_{r-1} \in \n \oplus \n^-, X_{r'} \in \n \textup{ if } r \textup{ is odd} \}
\end{align*}
which is a subgroup of $T_rB_r^{r'}G_r^{r''}$. ($\n^-$ is the nilpotent radical of the Borel subalgebra opposite to $\b$.) An easy calculation shows that this is a normal subgroup of $T_rB_r^{r'}G_r^{r''}$ and $T_r \hookrightarrow T_rB_r^{r'}G_r^{r''} \rightarrow T_rB_r^{r'}G_r^{r''}/K$ gives an isomorphism of algebraic groups. We define $\sigma'$ to be the composition of the quotient morphism by $K$ with the inverse of the isomorphism $T_r \simeq T_rB_r^{r'}G_r^{r''}/K$ above.

Now it is possible to define the diagram analogous to (\ref{diag1}),
\begin{displaymath}
\xymatrix{
&\Y \ar[ld]_{\tau'} \ar[rd]^{\pi'} \ar[d]^{\rho'}
\\T_r&\tilde{G}'_r \ar[l]_{\tilde{\tau}'} \ar[r]^{\tilde{\pi}'}&G_r }
\end{displaymath}
where new morphisms are defined as follows.
\begin{align*}
&\rho'(Tx, y, X_1, \cdots, X_{r-1}, Y_1, \cdots, Y_{r-1}) = (T_r B_r^{r'}G_r^{r''} x e^{\e X_1} \cdots e^{\e^{r-1} X_{r-1}}, y e^{\e Y_1} \cdots e^{\e^{r-1} Y_{r-1}})
\\&\tau'(Tx, y, X_1, \cdots, X_{r-1}, Y_1, \cdots, Y_{r-1}) 
\\&=  \sigma'((x e^{\e X_1} \cdots e^{\e^{r-1} X_{r-1}})(y e^{\e Y_1} \cdots e^{\e^{r-1} Y_{r-1}})(x e^{\e X_1} \cdots e^{\e^{r-1} X_{r-1}})^{-1})
\\&= \sigma'(xy e^{\e ({}^{y^{-1}}X_1)} \cdots e^{\e^{r-1} ({}^{y^{-1}}X_{r-1})}e^{\e Y_1} \cdots e^{\e^{r-1} Y_{r-1}}(e^{\e X_1} \cdots e^{\e^{r-1} X_{r-1}})^{-1}x^{-1})
\\&=\sigma'({}^x(ye^{\e \U_1} \cdots e^{\e^{r-1} \U_{r-1}}))=\sigma'(xyx^{-1} e^{\e ({}^x\U_1)} \cdots e^{\e^{r-1} ({}^x\U_{r-1})})
\\&\pi'(Tx, y, X_1, \cdots, X_{r-1}, Y_1, \cdots, Y_{r-1}) = y e^{\e Y_1} \cdots e^{\e^{r-1} Y_{r-1}}
\end{align*}
By the same reason as (\ref{comiso}) and (\ref{comiso2}) we have
$$\pi'_! (\iota'^*\E \otimes \L_{h'})
 = \pi'_! \tau'^* (\E \boxtimes \L_{f})=\tilde{\pi}'_! \tilde{\tau}'^* (\E \boxtimes \L_{f})[-2d](-d).$$
Therefore we have
\begin{cor}\label{maincor} If $p \geq r$ and $A_{r-1}$ is regular semisimple, then $\tilde{\pi}_! \tilde{\tau}^* (\E \boxtimes \L_{f}) \simeq \tilde{\pi}'_! \tilde{\tau}'^* (\E \boxtimes \L_{f})$.
\end{cor}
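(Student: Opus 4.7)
The strategy is to deduce this corollary immediately from Theorem \ref{mainthm} by producing the primed analogue of the chain of identifications in (\ref{comiso}) and (\ref{comiso2}). Concretely, the target is the equality
\begin{displaymath}
\pi'_! (\iota'^*\E \otimes \L_{h'}) = \pi'_! \tau'^* (\E \boxtimes \L_f) = \tilde{\pi}'_! \tilde{\tau}'^* (\E \boxtimes \L_f)[-2d](-d)
\end{displaymath}
with the same shift $d = r \dim \b - \dim T$ that appears on the unprimed side. Combining this with (\ref{comiso})--(\ref{comiso2}) and Theorem \ref{mainthm}, the corollary then follows by cancelling the common shift and Tate twist.

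For the second equality, I would check that $\rho' : \Y \to \tilde{G}_r'$ is an affine space bundle of relative dimension $d$. Using the normal subgroup $K \subset T_rB_r^{r'}G_r^{r''}$ from this section and the isomorphism $T_r \simeq T_rB_r^{r'}G_r^{r''}/K$, the group $T_r B_r^{r'} G_r^{r''}$ decomposes as a semidirect product $T_r \ltimes K$, so its cosets modulo $T$ are isomorphic to $(T_r/T) \times K$ as varieties, hence affine. A coordinate count using (\ref{giso}) shows that elements of $T_r B_r^{r'} G_r^{r''}$ correspond to $(x, X_1, \ldots, X_{r-1})$ with $x \in T$, $X_i \in \t$ for $i < r'$, $X_{r'} \in \b$ when $r$ is odd, and $X_i \in \g$ for $i \geq r''$, giving total dimension $r \dim \b$. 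Hence the relative dimension of $\rho'$ equals $d$, and $\rho'_!\rho'^* \simeq \mathrm{id}[-2d](-d)$ produces the desired right-hand equality.

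For the first equality, I would expand $\tau'$ exactly as $\tau$ was expanded after (\ref{diag1}), obtaining $\sigma'(xyx^{-1} e^{\e({}^x \U_1)} \cdots e^{\e^{r-1}({}^x \U_{r-1})})$. The point is that $\sigma'$ amounts to projecting each ${}^x \U_j$ onto its $\t$-component in the appropriate ambient space ($\t$ for $j < r'$, $\b = \t \oplus \n$ for $j = r'$ in the odd case, and $\g = \t \oplus \n \oplus \n^-$ for $j \geq r''$). Because the chosen invariant form makes $\t$ orthogonal to $\n \oplus \n^-$ and each $A_j$ lies in $\t$, the pairings $\langle A_j, {}^x \U_j \rangle$ defining $f \circ \tau'$ agree with those defining $h'$. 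Likewise $xyx^{-1} \in T$ on $\Y$ forces $\overline{xyx^{-1}} = xyx^{-1} = \iota'$, so the character-sheaf factor matches $\iota'^*\E$.

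With both identifications in hand, applying Theorem \ref{mainthm} to $\pi_! (\iota^*\E \otimes \L_h) \simeq \pi'_! (\iota'^*\E \otimes \L_{h'})$ yields
\begin{displaymath}
\tilde{\pi}_! \tilde{\tau}^* (\E \boxtimes \L_f)[-2d](-d) \simeq \tilde{\pi}'_! \tilde{\tau}'^* (\E \boxtimes \L_f)[-2d](-d),
\end{displaymath}
and inverting $[-2d](-d)$ completes the proof. The only real obstacle is the bookkeeping in the second step: one has to verify that $K$ is exactly the kernel of the naive $\t$-projection on the Lie-algebra pieces, which is why the definition of $K$ carefully singles out $\n \oplus \n^-$ in degrees $\geq r''$ and $\n$ alone in degree $r'$ when $r$ is odd.
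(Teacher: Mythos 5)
Your proposal is correct and follows essentially the same route as the paper: the paper likewise reduces the corollary to the primed analogues of (\ref{comiso}) and (\ref{comiso2}) via the bundle $\rho':\Y\to\tilde{G}'_r$ and the quotient by $K$, and then invokes Theorem \ref{mainthm}. The only difference is that you spell out the dimension count $\dim(T_rB_r^{r'}G_r^{r''})=r\dim\b$ and the orthogonality $\t\perp\n\oplus\n^-$, which the paper leaves implicit with ``by the same reason as (\ref{comiso}) and (\ref{comiso2})''.
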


Now we suppose that $G$, $B$, and $T$ are defined over $\F_q$ and that $\E$ is a character sheaf which corresponds to some character $\chi : T(\F_q) \rightarrow \mathbb{C}$. Also we suppose that $f$ is defined over $\F_q$ so that $\psi \circ f$ defines a character of $\t(\F_q)^{r-1}$. (Recall that we fixed an additive character $\psi: \F_q \rightarrow \mathbb{C}^*$ before.) We may regard the character $\chi \times (\psi\circ f) : T(\F_q)\times \t(\F_q)^{r-1} \rightarrow \mathbb{C}$ as that of $T_r(\F_q)$ by the isomorphism (\ref{tiso}).

Then by construction the trace of $\tilde{\pi}_! \tilde{\tau}^* (\E \boxtimes \L_{f})$ corresponds to the character 
\begin{displaymath}\Ind^{G_r(\F_q)}_{B_r(\F_q)} (\chi \times (\psi\circ f)) : G_r(\F_q) \rightarrow \mathbb{C}.
\end{displaymath}
Here $\chi \times (\psi\circ f)$ is regarded as a character on ${B_r(\F_q)}$ by pullback under $\sigma : B_r(\F_q) \rightarrow T_r(\F_q)$.

Similarly, $\chi \times (\psi\circ f)$ can also be regarded as a character on $T_r(\F_q)B_r^{r'}(\F_q)G_r^{r''}(\F_q)$ by pullback under $\sigma' : T_r(\F_q)B_r^{r'}(\F_q)G_r^{r''}(\F_q) \rightarrow T_r(\F_q)$. Thus by construction $\tilde{\pi}'_! \tilde{\tau}'^* (\E \boxtimes \L_{f})$ gives the character 
\begin{displaymath}
\Ind^{G_r(\F_q)}_{T_r(\F_q)B_r^{r'}(\F_q)G_r^{r''}(\F_q)} (\chi \times (\psi\circ f)) : G_r(\F_q) \rightarrow\mathbb{C}.
\end{displaymath}
Now Corollary \ref{maincor} implies the following theorem.

\begin{theorem} Assume $p \geq r$ and $A_{r-1}$ is regular semisimple, and suppose the conditions above. Then we have
\begin{displaymath}
\Ind^{G_r(\F_q)}_{B_r(\F_q)} (\chi \times (\psi\circ f)) = \Ind^{G_r(\F_q)}_{T_r(\F_q)B_r^{r'}(\F_q)G_r^{r''}(\F_q)} (\chi \times (\psi\circ f)).
\end{displaymath}
\end{theorem}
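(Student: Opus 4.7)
The plan is to deduce the theorem from Corollary \ref{maincor} by passing from the isomorphism of $\ell$-adic complexes there to an identity of their trace-of-Frobenius functions via Grothendieck's sheaf-function dictionary, and then to recognize each of these trace functions as the induced character appearing in the statement. Under the assumption that $G$, $B$, $T$, the form $\langle\ ,\ \rangle$, the character sheaf $\E$, and the linear form $f$ are all defined over $\F_q$, every variety and morphism entering the diagram (\ref{diag1}) and its $\Y$-analogue---including $\tilde{G}_r$, $\tilde{G}_r'$, $\sigma$, $\sigma'$, $\tilde{\pi}$, $\tilde{\pi}'$, $\tilde{\tau}$, $\tilde{\tau}'$---carries a natural $\F_q$-structure, and the sheaf $\E \boxtimes \L_f$ is equivariantly defined, so the trace formula applies termwise to $\tilde{\pi}_!$ and $\tilde{\pi}'_!$ and commutes with the pullbacks $\tilde{\tau}^*$, $\tilde{\tau}'^*$.

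First I would evaluate the trace function of $\tilde{\pi}_!\tilde{\tau}^*(\E \boxtimes \L_f)$ at an $\F_q$-point $g' \in G_r(\F_q)$. By Grothendieck's trace formula this equals a sum over the $\F_q$-points of the fiber $\tilde{\pi}^{-1}(g')$, i.e.\ over right cosets $B_r(\F_q)g$ with $gg'g^{-1} \in B_r(\F_q)$, of the value $(\chi \times (\psi\circ f))(\sigma(gg'g^{-1}))$; here one uses that the trace function of $\E$ on $T(\F_q)$ is $\chi$, that of $\L_f$ on $\t(\F_q)^{r-1}$ is $\psi\circ f$, and via (\ref{tiso}) the trace function of $\E \boxtimes \L_f$ on $T_r(\F_q)$ is $\chi \times (\psi\circ f)$. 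Since $\sigma$ is invariant under $B_r$-conjugation (its kernel is $U_r$ and its image $T_r$ is abelian), the summand is constant on each right coset, and the sum is precisely the standard right-coset formula for $\Ind^{G_r(\F_q)}_{B_r(\F_q)}((\chi \times (\psi\circ f)) \circ \sigma)$ evaluated at $g'$.

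The same calculation applied to $\tilde{\pi}'_!\tilde{\tau}'^*(\E \boxtimes \L_f)$ yields $\Ind^{G_r(\F_q)}_{T_r(\F_q)B_r^{r'}(\F_q)G_r^{r''}(\F_q)}((\chi \times (\psi\circ f)) \circ \sigma')(g')$; the combinatorial input is the fact, already established in the construction of $\sigma'$, that $K$ is a normal subgroup of $T_rB_r^{r'}G_r^{r''}$ with $T_r \simeq T_rB_r^{r'}G_r^{r''}/K$, which ensures both that $(\chi \times (\psi\circ f)) \circ \sigma'$ is a well-defined character of $T_r(\F_q)B_r^{r'}(\F_q)G_r^{r''}(\F_q)$ and that it is invariant under conjugation by that ambient subgroup---the two conditions needed to rewrite the trace-function sum as an induced character. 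Corollary \ref{maincor} then forces equality of the two trace functions on $G_r(\F_q)$, hence of the two induced characters. The only real work is the unpacking of the fiber parametrization and coset description, which is routine once one notes that all relevant morphisms are $\F_q$-equivariant; the hypotheses $p \geq r$ and $A_{r-1}$ regular semisimple enter only through their role in Corollary \ref{maincor} itself.
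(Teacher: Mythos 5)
Your proposal is correct and follows essentially the same route as the paper: identify the trace-of-Frobenius function of each complex with the corresponding induced character via the sheaf-function dictionary (the paper asserts this "by construction"), then invoke Corollary \ref{maincor}. You simply spell out the fiber/coset bookkeeping and the conjugation-invariance of $\sigma$ and $\sigma'$ that the paper leaves implicit.
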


\bibliographystyle{alpha}
\bibliography{./comparison_revised_fourth}

\end{document}